\documentclass[10pt,a4paper]{article}

\usepackage[british]{babel}
\usepackage[utf8]{inputenc}
\usepackage{datetime}
\newdateformat{dateformatone}{\monthname[\THEMONTH] \THEDAY, \THEYEAR}

\usepackage{tabularx}

\usepackage{amssymb,amsmath,amsthm,thmtools}
\usepackage{float}
\numberwithin{equation}{section}
\usepackage{mathtools}	
\usepackage{accents}
\usepackage{multirow}
\usepackage{graphicx}
\usepackage{amscd}
\usepackage{epic, eepic}
\usepackage{url}
\usepackage{xcolor}
\usepackage{todonotes}
\usepackage{enumerate}
\usepackage{enumitem}
\usepackage{comment}
\usepackage{hhline}
\usepackage{bbm}
\usepackage[hidelinks]{hyperref}
    \addto\extrasbritish{%
    }

\usepackage{tikz-cd}

 \linespread{1.15}
 \textwidth 17.5 cm
 \textheight 24 cm
 \topmargin -1cm
\oddsidemargin -1cm
 \evensidemargin -1 cm

\newcommand{\ff}{\mathbb{F}}

\newcommand{\zz}{\mathbb{Z}}
\newcommand{\qq}{\mathbb{Q}}

\DeclareMathOperator{\Sp}{Sp}
\DeclareMathOperator{\pf}{pf}

\newcommand{\symdiff}{\mathop{\bigtriangleup}}
\newcommand{\twovec}[2]{\begin{pmatrix}#1 \\ #2\end{pmatrix}}
\newcommand{\twomatr}[4]{\begin{pmatrix}#1 & #2\\ #3 & #4\end{pmatrix}}

 \makeatletter
 \@addtoreset{equation}{section}
 \makeatother

 \makeatletter
 \@addtoreset{table}{section}
 \makeatother
 
\setcounter{page}{1}

\declaretheorem[style=plain,name=Theorem,numberwithin=section]{theorem}
\declaretheorem[style=plain,name=Proposition,sibling=theorem]{proposition}
\declaretheorem[style=plain,name=Lemma,sibling=theorem]{lemma}
\declaretheorem[style=plain,name=Problem,sibling=theorem]{problem}
\declaretheorem[style=plain,name=Corollary,sibling=theorem]{corollary}

\declaretheorem[style=plain,name=Construction,sibling=theorem]{construction}
\declaretheorem[style=plain,name=Observation,sibling=theorem]{observation}
\declaretheorem[style=plain,name=Observations,sibling=theorem]{observations}

\declaretheorem[style=definition,name=Definition,sibling=theorem]{definition}
\declaretheorem[style=definition,name=Notation,sibling=theorem]{notation}
\declaretheorem[style=definition,name=Conjecture,sibling=theorem]{conjecture}

\declaretheorem[style=definition,name=Remark,sibling=theorem]{remark}

    \makeatletter
    \providecommand*{\cupdot}{%
      \mathbin{%
        \mathpalette\@cupdot{}%
      }%
    }
    \newcommand*{\@cupdot}[2]{%
      \ooalign{%
        $\m@th#1\cup$\cr
        \sbox0{$#1\cup$}%
        \dimen@=\ht0 %
        \sbox0{$\m@th#1\cdot$}%
        \advance\dimen@ by -\ht0 %
        \dimen@=.5\dimen@
        \hidewidth\raise\dimen@\box0\hidewidth
      }%
    }
    
    \providecommand*{\bigcupdot}{%
      \mathop{%
        \vphantom{\bigcup}%
        \mathpalette\@bigcupdot{}%
      }%
    }
    \newcommand*{\@bigcupdot}[2]{%
      \ooalign{%
        $\m@th#1\bigcup$\cr
        \sbox0{$#1\bigcup$}%
        \dimen@=\ht0 %
        \advance\dimen@ by -\dp0 %
        \sbox0{\scalebox{2}{$\m@th#1\cdot$}}%
        \advance\dimen@ by -\ht0 %
        \dimen@=.5\dimen@
        \hidewidth\raise\dimen@\box0\hidewidth
      }%
    }
    \makeatother

\title{Code-based $[3,1]$-avoiders in finite affine spaces $\mathrm{AG}(n,2)$}

\author{Benedek Kovács\thanks{ELTE Linear Hypergraphs Research Group, Eötvös Loránd University, Budapest, Hungary. The author is supported by the EKÖP-24 University Excellence Scholarship Program of the Ministry for Culture and Innovation from the source of the National Research, Development and Innovation Fund.
		E-mail: {\tt benoke981@gmail.com}}
}
\date{\dateformatone\today}

\begin{document}
	
	\maketitle
	
	\begin{abstract}
        The author, together with Nagy, studied the following problem on unavoidable intersections of given size in binary affine spaces. Given an $m$-element set $S\subseteq \ff_2^n$, is there guaranteed to be a $[k,t]$-flat, that is, a $k$-dimensional affine subspace of $\ff_2^n$ containing exactly $t$ points of $S$? Such problems can be viewed as generalizations of the cap set problem over the binary field. They conjectured that for every fixed pair $(k,t)$ with $k\ge 1$ and $0\le t\le 2^k$, the density of values $m\in \{0,...,2^n\}$ for which a $[k,t]$-flat is guaranteed tends to $1$.
        
        In this paper, motivated by the study of the smallest open case $(k,t)=(3,1)$, we present explicit constructions of sets in $\ff_2^n$ avoiding $[k,1]$-flats for exponentially many sizes. These sets rely on carefully constructed binary linear codes, whose weight enumerators determine the size of the construction.
		
		\textit{Keywords}: affine space, linear code, unavoidable, weight enumerator
	\end{abstract}
    
\section{Introduction}

Consider the points of the $n$-dimensional affine space $\mathrm{AG}(n,2)$ over the binary field $\ff_2$, which also correspond to the elements of the vector space $\ff_2^n$.

Let $S\subseteq \mathrm{AG}(n,2)$ be a point set of a fixed cardinality $m$. We investigate the intersections of $S$ with the $k$-dimensional affine subspaces of $\ff_2^n$, referred to as \textit{$k$-flats}. We say that $S$ \textit{induces} a $[k,t]$-flat if there exists a $k$-flat $F_k\subseteq \ff_2^n$ such that $|S\cap F_k|=t$. Otherwise we say that $S$ is a \textit{$[k,t]$-avoider}.

Our question is the following: for what quadruples $(n,m;k,t)$ does it hold that every $S\subseteq \ff_2^n$ of size $m$ induces a $[k,t]$-flat? If this is true, we write $[n,m]\to [k,t]$. The question was posed by the author and Nagy \cite{KN25} as a $q$-analogue of a graph-theoretical problem by Erdős, Füredi, Rothschild and T. Sós \cite{EFRS99}. Note that a $[k,0]$-avoider is also called a \textit{blocking set} with respect to $k$-dimensional subspaces, which is an extensively covered topic in the projective and affine settings (see \cite{BSS11} for a survey). Also note that the corresponding question in the ternary field $\ff_3$ contains the well-known \textit{cap set problem} \cite{EG17}, asking for the largest size of a subset in $\ff_3^n$ without a full line. This would correspond to $k=1$ and $t=3$.

We defined the $(n;k,t)$\textit{-spectrum} $\Sp(n,k,t)$, and its \textit{density}, $\rho(n;k,t)$, by
\begin{equation*}
\begin{split}
\Sp(n;k,t) & :=\{0\le m\le 2^n: [n,m]\to [k,t] \},\\
\rho(n;k,t) & :=\frac{|\Sp(n;k,t)|}{2^n}.
\end{split}
\end{equation*}

Here, and throughout this paper, we assume that $(n,k,t)$ is a triple of integers with $n\ge 1$, $1\le k\le n$ and $0\le t\le 2^k$.

We presented the following main conjecture:
\begin{conjecture}[Kovács, Nagy \cite{KN25}]\label{con:maincon} For every $k\ge 1$ and $0\le t\le 2^k$, we have
$$\lim_{n\to \infty} \rho(n;k,t)=1.$$		
\end{conjecture}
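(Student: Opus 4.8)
The plan is to establish the equivalent statement that the set of avoidable sizes $A(n;k,t):=\{m:\text{some }[k,t]\text{-avoider of size }m\text{ exists}\}$ satisfies $|A(n;k,t)|=o(2^n)$, since $m\in\Sp(n;k,t)$ exactly when $m\notin A(n;k,t)$. First I would reduce the range of parameters. Complementation $S\mapsto\ff_2^n\setminus S$ sends a $[k,t]$-avoider to a $[k,2^k-t]$-avoider, because $|S\cap F|=t$ iff $|F\setminus S|=2^k-t$ on any $k$-flat $F$; hence $A(n;k,t)=\{2^n-m:m\in A(n;k,2^k-t)\}$ and it suffices to treat $t\le 2^{k-1}$. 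The two extreme bands are trivial: if $m<t$ then $|S\cap F|\le m<t$ for all $F$, and if $2^n-m<2^k-t$ then no flat can omit the required $2^k-t$ points; together these account for at most $2^k=O(1)$ avoidable sizes, which are negligible. Everything therefore reduces to showing that all but $o(2^n)$ of the bulk sizes force a $[k,t]$-flat.

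For the bulk I would exploit a local reformulation. Fix $p\in S$ and set $B_p:=(S-p)\setminus\{0\}$. For a $k$-dimensional linear subspace $W$ one has $|S\cap(p+W)|=1+|B_p\cap W|$, and since every $k$-flat meeting $S$ passes through some point of $S$, for $t\ge 1$ the set $S$ is a $[k,t]$-avoider if and only if $|B_p\cap W|\ne t-1$ for every $p\in S$ and every $k$-subspace $W$. For $t=1$ this says each $B_p$ blocks all $k$-subspaces, so by the Bose--Burton theorem $|B_p|\ge 2^{n-k+1}-1$, giving $m\ge 2^{n-k+1}$, with equality precisely when $S$ is an $(n-k+1)$-dimensional affine flat, in which case every nonempty $S\cap F$ is a coset of a subspace of positive dimension and hence has at least $2$ points. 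The strategy is to upgrade this threshold bound into a stability theorem: an avoider whose size exceeds the minimum only moderately should be a controlled perturbation of a union of cosets of a fixed subspace, i.e.\ (the translate of) a binary linear code. For $t\ge 2$ the reformulation is self-similar---around each point the problem becomes a linear instance of avoiding the value $t-1$ against $k$-subspaces---which suggests an induction on $t$, with the subtlety that the through-origin constraint is weaker than the full affine one. The goal of this step is a structure theorem asserting that every bulk avoider is, up to a negligible correction, one of the code-based sets constructed in this paper, whose cardinalities are exactly the quantities read off from weight enumerators.

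Granting such a structure theorem, the density estimate becomes pure counting: one enumerates the sizes realizable by (approximately) code-based avoiders and checks that there are $o(2^n)$ of them. Because those sizes are assembled from the weight-enumerator data of codes of bounded complexity, one expects them to fall into a union of short arithmetic-progression-like families of total length $c^n$ for some $c<2$, which yields $|A(n;k,t)|=o(2^n)$ and hence $\rho(n;k,t)\to 1$.

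The hard part---and the reason the assertion is still only a conjecture---will be the converse structure theorem, that bulk avoiders must be code-based; the constructions of this paper supply the matching examples but not the characterisation. Two natural shortcuts demonstrably fail. A second-moment attack is hopeless because, for a uniformly random $k$-flat $F$, the probability that $F$ contains a fixed point is $2^{k-n}$ and the probability that it contains a fixed pair of distinct points is $2^{k-n}\tfrac{2^k-1}{2^n-1}$, independent of which points; hence both the mean $2^k m/2^n$ and the variance of $|S\cap F|$ depend only on $m$ and never on the structure of $S$, so they cannot certify that the value $t$ is avoided. A discrete intermediate-value approach, sweeping through $k$-flats so that the intersection size changes by one at a time, also fails, since two distinct $k$-flats meet in at most a $(k-1)$-flat and adjacent flats can therefore differ in $S$-count by as much as $2^{k-1}$. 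Overcoming these obstructions---presumably through a genuine stability analysis of the Bose--Burton extremal configuration and its higher-$t$ analogues---is where the essential difficulty lies.
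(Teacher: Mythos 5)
This statement is Conjecture~\ref{con:maincon}; the paper does not prove it and explicitly states that it remains open, including in the smallest case $(k,t)=(3,1)$ that the rest of the paper is about. So there is no proof in the paper to measure you against, and your text, by your own admission, is not a proof either: it is a program whose load-bearing step --- the ``structure theorem'' that every bulk $[k,t]$-avoider is a negligible perturbation of a code-based set --- is entirely unproven. The surrounding reductions are correct but cheap: complementation gives $A(n;k,t)=\{2^n-m: m\in A(n;k,2^k-t)\}$, the local reformulation $|S\cap(p+W)|=1+|B_p\cap W|$ is valid, and Bose--Burton does give the threshold $|S|\ge 2^{n-k+1}$ for nonempty $[k,1]$-avoiders with equality exactly for $(n-k+1)$-flats; but this only recovers the paper's basic examples (Observation~\ref{obs:subspace}, Corollary~\ref{basic_avoiders}) and contributes nothing toward the density claim $|A(n;k,t)|=o(2^n)$.

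Beyond being missing, the proposed structure theorem is in direct tension with results already in the paper. Theorem~\ref{thm:ujgur} produces, by a probabilistic argument, $[3,6]$-evasive sets whose complements are $[3,1]$-avoiders of \emph{every} size in $[2^n-K\cdot 2^{n/2},\,2^n]$; these sizes lie in your ``bulk'' (your trivial bands exclude only $O(2^k)$ values), and the sets themselves carry no code structure, so ``bulk avoiders are approximately code-based'' cannot hold in any form strong enough to reduce the size-count to weight-enumerator data. Your final counting step is also not free: the paper's Corollary~\ref{cor:macwilliams_limit} bounds the number of \emph{code-based} sizes by $O(nc_k^n)$ with $c_k<2$ via the MacWilliams identity, but once you allow ``negligible corrections'' to a code-based set you must control how many new sizes the corrections introduce, which could a priori be all of $\{0,\dots,2^n\}$. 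In short: the reductions are sound, the conjecture is untouched, and the essential difficulty you correctly identify is exactly the part that is absent.
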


The conjecture was shown true for $t\in \{0, 2^{k-1}, 2^k\}$ when $k\ge 3$, and for all $t$ when $k\in \{1,2\}$. The main result used for this (apart from the basic cases) was a result similar to Szemerédi's Cube Lemma (see \cite[Corollary 2.1]{Setyawan98}), which also follows from a work of Bonin and Qin \cite{BQ00}, stating that for $m\ge \frac52\cdot 2^{n\left(1-\frac{1}{2^{k-1}}\right)}$, we have $[n,m]\to [k,2^k]$. The proof proceeded by picking the value $\mathbf{d}\in \ff_2^n$ that appears the most commonly as a difference of two elements in $S$, and applying the statement inductively for the quotient space $\ff_2^n/\left\langle \mathbf{d}\right\rangle$.

We also showed that $\liminf\rho(n;k,t)>0$ when $t$ is of the form $2^a$ or $3\cdot 2^a$ for some integer $1\le a\le k-2$. The proof of the latter result was more involved, using the notion of \textit{additive energy} defined by Tao and Vu \cite{TV06}, and bounding sizes of hypercube cuts based on a theorem of Hart \cite{H75}. However, \autoref{con:maincon} stays open in these cases too.

Improving on an argument by Guruswami \cite{Guruswami11}, we also used the probabilistic method to obtain so-called \textit{$[k,c]$-evasive} sets, defined as sets in $\ff_2^n$ intersecting every $k$-flat in at most $c$ points. Our result was the following:

\begin{theorem}\label{thm:ujgur}
	Let $k,c\in \zz_{>0}$ with $c\ge k+1$. Then for some $K>0$, there exists a $[k,c]$-evasive set in $\ff_2^n$ of size at least $K\cdot 2^{n\left(1-\frac{k}{c}\right)}$ when $n$ is sufficiently large.
\end{theorem}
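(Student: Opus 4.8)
The plan is to use the probabilistic deletion method, arranging the first-moment estimate so that a balance of exponents produces exactly the factor $1-k/c$. First I would include each point of $\ff_2^n$ in a random set $S$ independently with probability $p$, to be optimised later, so that $\mathbb{E}[|S|]=p\cdot 2^n$. Call a $(c+1)$-element subset $T\subseteq S$ a \emph{bad configuration} if all of its points lie in a common $k$-flat; note that $S$ fails to be $[k,c]$-evasive precisely when it contains a bad configuration. Deleting one point from each bad configuration yields a subset $S'$ that is $[k,c]$-evasive and satisfies $|S'|\ge |S|-Y$, where $Y$ is the number of bad configurations.

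The crux is to bound $\mathbb{E}[Y]$ by the first moment. The number of $k$-flats in $\ff_2^n$ equals $\binom{n}{k}_2\cdot 2^{n-k}$, which is $O\!\left(2^{(k+1)(n-k)}\right)$ for fixed $k$; each flat contains $\binom{2^k}{c+1}$ subsets of size $c+1$, and any fixed such subset is entirely contained in $S$ with probability $p^{c+1}$. Summing over (flat, subset) pairs — which only overcounts bad configurations, hence gives a valid upper bound — yields $\mathbb{E}[Y]\le \binom{n}{k}_2 2^{n-k}\binom{2^k}{c+1}p^{c+1}$. By linearity, $\mathbb{E}[|S'|]\ge p\cdot 2^n-\binom{n}{k}_2 2^{n-k}\binom{2^k}{c+1}p^{c+1}$, so it suffices to choose $p$ making the right-hand side grow like $2^{n(1-k/c)}$.

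Next I would set $p=\lambda\,2^{-nk/c}$ for a constant $\lambda>0$. Then the positive term equals $\lambda\,2^{n(1-k/c)}$, while the exponent of the subtracted term is governed by the identity $(k+1)-\tfrac{k(c+1)}{c}=1-\tfrac{k}{c}$; thus it is at most $C_{k,c}\,\lambda^{c+1}2^{n(1-k/c)}$ with a constant $C_{k,c}$ depending only on $k$ and $c$. Both terms therefore share the common factor $2^{n(1-k/c)}$, and $\mathbb{E}[|S'|]\ge \left(\lambda-C_{k,c}\lambda^{c+1}\right)2^{n(1-k/c)}$. Since $c+1\ge 2$, choosing $\lambda$ small enough (or optimising, e.g. $\lambda=\left(C_{k,c}(c+1)\right)^{-1/c}$) makes the coefficient a positive constant $K$, and a set attaining at least the expected size then exists. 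The hypothesis $c\ge k+1$ enters exactly here, ensuring $1-k/c>0$ so that this dominant term genuinely grows with $n$.

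The main obstacle I anticipate is not any single estimate but the exponent bookkeeping: one must count $k$-flats correctly via the Gaussian binomial coefficient and verify the identity $(k+1)-k(c+1)/c=1-k/c$ that makes the two competing terms scale identically, since it is this exact cancellation — rather than a crude union bound — that yields the improved exponent $1-k/c$ over the baseline argument. The degenerate case $c\ge 2^k$, where no $k$-flat can hold $c+1$ points and one may take $S=\ff_2^n$, is immediate and can be dispatched separately.
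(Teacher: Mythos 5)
Your argument is correct. Note that the paper does not actually prove Theorem \ref{thm:ujgur} here --- it is quoted from the earlier work \cite{KN25}, described only as ``the probabilistic method, improving on an argument of Guruswami'' --- so there is no in-text proof to compare against line by line; but your alteration (deletion) argument is a complete and valid derivation of the stated bound, and is exactly the kind of refinement over a plain union bound that the quoted description suggests. The key computations all check out: the number of $k$-flats is $\binom{n}{k}_2\cdot 2^{n-k}=\Theta\bigl(2^{(k+1)(n-k)}\bigr)$ for fixed $k$; the first-moment bound $\mathbb{E}[Y]\le\binom{n}{k}_2 2^{n-k}\binom{2^k}{c+1}p^{c+1}$ is a legitimate overcount of bad configurations; the exponent identity $(k+1)-k(c+1)/c=1-k/c$ is correct, so with $p=\lambda 2^{-nk/c}$ both terms scale as $2^{n(1-k/c)}$ and a small enough $\lambda$ leaves a positive coefficient; and deleting one point per bad configuration does yield a $[k,c]$-evasive set of at least the expected size. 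You are also right that the crude union bound (requiring \emph{no} bad configuration at all) would only give the weaker exponent $1-\frac{k+1}{c+1}$, so the deletion step is genuinely needed to reach $1-\frac{k}{c}$. The only cosmetic remark is that the hypothesis $c\ge k+1$ is used solely to make the conclusion nontrivial ($1-k/c>0$); the estimate itself holds for all $c\ge 1$.
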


As $[k,c]$-evasive sets are $[k,t]$-avoiders for every $t>c$, this theorem can easily be used to show that the number of sizes $m$ outside $\Sp(n;k,t)$ is exponentially large for each pair $(k,t)$ with $k\ge 3$ and $(k,t)\ne (3,4)$. For example, since the complement of any $[3,6]$-evasive set is a $[3,1]$-avoider, there exist $[3,1]$-avoiders of all sizes $2^n-K\cdot 2^{n/2}\le m\le 2^n$ by \autoref{thm:ujgur}. However, as these $[k,t]$-avoiders are obtained with a random method, there is no systematic way to describe them.

The research presented in this paper mainly concerns the smallest open case of \autoref{con:maincon}, which is $(k,t)=(3,1)$. Our main result is a construction of an explicit family of $[3,1]$-avoiders that still exhibit exponentially many different sizes. We call this the \textit{code-based construction}, which takes an arbitrary binary linear code as an input, and derives a $[k,1]$-avoider (for fixed $k\ge 3$) whose size only depends on the weight enumerator of the code evaluated at certain inputs.

Let us make note of the following definitions.

\begin{definition}[Weight of a codeword]
In a binary linear code $C$, the \textit{weight} of a codeword $\mathbf{x}\in C$ is the number of coordinates of $\mathbf{x}$ equal to $1$.
\end{definition}

\begin{definition}[Weight enumerator]
The \textit{weight enumerator} of a binary code $C\subseteq \ff_2^{\ell}$ is a bivariate polynomial $W_C$ defined by
$$W_C(x,y)=\sum_{w=0}^{\ell} A_w x^w y^{\ell-w},$$

where $\ell$ is the length of $C$, and $A_w$ is the number of codewords in $C$ having weight $w$ for each $0\le w\le \ell$.
\end{definition}

For further details on the coding-theoretical terminology used in the paper, see Subsection \ref{subsec:codingtheory}. We now state our main results.

\begin{restatable}{theorem}{restateavoiderfromcode}\label{thm:avoiderfromcode}
	Fix integers $k\ge 3$ and $\ell\ge 1$. Take a binary linear code $C\le \ff_2^{\ell}$. Then for $n=\ell(k-1)$, there exists a $[k,1]$-avoider $S$ in $\ff_2^n$ for which $|S|=2^n-W_C(1, 2^{k-1}-1)$.
\end{restatable}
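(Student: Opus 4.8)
The plan is to produce $S$ explicitly as the complement of a ``typed'' set built from $C$, and then to verify its size and the avoider property separately; the size is a routine count, while the avoider property is the crux and will reduce to a single parity identity. First I would identify $\ff_2^n$ with $(\ff_2^{k-1})^{\ell}$ by grouping the $n=\ell(k-1)$ coordinates into $\ell$ blocks of length $k-1$, and write $\pi_i\colon \ff_2^n\to\ff_2^{k-1}$ for the projection onto the $i$-th block. To each point $\mathbf p$ I attach its \emph{type} $\tau(\mathbf p)\in\ff_2^{\ell}$, where $\tau(\mathbf p)_i=1$ exactly when $\pi_i(\mathbf p)=\mathbf 0$. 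I then set $T:=\{\mathbf p:\tau(\mathbf p)\in C\}$ and $S:=\ff_2^n\setminus T$. Counting $T$ by type is immediate: the number of points of type $\mathbf c$ is $(2^{k-1}-1)^{\ell-\mathrm{wt}(\mathbf c)}$, since each block indexed by a zero of $\mathbf c$ may be any of the $2^{k-1}-1$ nonzero vectors, while each block indexed by a one of $\mathbf c$ is forced to be $\mathbf 0$. Summing over $\mathbf c\in C$ gives
$$|T|=\sum_{\mathbf c\in C}(2^{k-1}-1)^{\ell-\mathrm{wt}(\mathbf c)}=W_C\!\left(1,\,2^{k-1}-1\right),$$
so that $|S|=2^n-W_C(1,2^{k-1}-1)$, as claimed.

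The heart of the argument is to show that for every $k$-flat $F$ the number of points of $F$ lying in $S$ is never exactly $1$. The key observation I would isolate is that, because a $k$-flat has dimension strictly larger than a single block, the types sum to zero: $\sum_{\mathbf p\in F}\tau(\mathbf p)=\mathbf 0$ in $\ff_2^{\ell}$. To see this, write $F=\mathbf a+U$ with $\dim U=k$; the $i$-th coordinate of the sum counts the points $\mathbf p\in F$ with $\pi_i(\mathbf p)=\mathbf 0$, and these form a coset of $U\cap\ker\pi_i$, a subspace of dimension $\dim U-\dim\pi_i(U)\ge k-(k-1)=1$. Hence this count is either zero or even, so every coordinate of the sum vanishes modulo $2$.

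To finish, I would argue by contradiction: suppose some $k$-flat $F$ meets $S$ in exactly one point $\mathbf p^{*}$, so that every other point of $F$ has type in $C$. Since the total sum of types over $F$ is $\mathbf 0$ and $-1=1$ over $\ff_2$, we get $\tau(\mathbf p^{*})=\sum_{\mathbf p\in F,\ \mathbf p\ne\mathbf p^{*}}\tau(\mathbf p)$, and the right-hand side is a sum of codewords, hence itself a codeword because $C$ is linear. This forces $\tau(\mathbf p^{*})\in C$, i.e.\ $\mathbf p^{*}\in T$, contradicting $\mathbf p^{*}\in S$. Therefore $|F\cap S|\ne 1$ for every $k$-flat $F$, and $S$ is a $[k,1]$-avoider.

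I expect the main obstacle to be finding the correct invariant rather than any computation. The construction and the counting are routine, and the tempting local approach, namely trying to pair the lone $S$-point with a second $S$-point of the \emph{same} type inside $F$, can genuinely fail, since the relevant ``free directions'' for the different blocks need not share a common nonzero vector. The clean route is to exploit that the block dimension $k-1$ is exactly one below $k$, which guarantees that every block-zero slice of a $k$-flat has even size and thus yields the global identity $\sum_{\mathbf p\in F}\tau(\mathbf p)=\mathbf 0$; once this is in hand, the linearity of $C$ disposes of a single exceptional type in one line.
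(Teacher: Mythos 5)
Your proposal is correct. The construction itself coincides with the paper's Construction \ref{con:codebased} up to a translation (you mark a block when it is all-zero, the paper when it is all-one), and the size computation is identical. Where you genuinely diverge is in verifying the avoider property. The paper passes to the dual: it takes a parity check matrix $H$ of $C$, observes that $\mathbf{x}\in S$ iff some row of $H\mathbf{s}(\mathbf{x})^T$ equals $1$, i.e.\ iff $\mathbf{x}$ lies in some symmetric difference $T_r$ of the block flats $F_i$, and then invokes the closure observations (Observations \ref{obs:subspace} and \ref{obs:symdiff_subspaces}) that such unions of symmetric differences of $(n-k+1)$-flats are $[k,1]$-avoiders. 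You instead stay on the primal side: you prove the single identity $\sum_{\mathbf{p}\in F}\tau(\mathbf{p})=\mathbf{0}$ for every $k$-flat $F$ (whose coordinatewise justification --- each block-zero slice of $F$ is empty or a coset of a subspace of dimension at least $1$ --- is exactly the paper's Observation \ref{obs:subspace}(a) in disguise) and then use linearity of $C$ to rule out a unique exceptional type. Your route is shorter and self-contained, needs no parity check matrix, and in fact extends immediately to affine codes (a sum of an odd number, here $2^k-1$, of elements of a coset stays in the coset), which the paper handles in a separate remark; the paper's route buys the structural fact that $S$ is a union of symmetric differences of $(n-k+1)$-flats, i.e.\ a disjunction of Reed--Muller codewords, which it exploits elsewhere in Section \ref{sec:k1avoid}. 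Both arguments are sound; no gap.
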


Here the set $S$ is obtained using the code-based construction, which is detailed in Construction \ref{con:codebased}.

\begin{restatable}{theorem}{restateexpmany}\label{thm:expmany}
	Suppose that $n\ge 4$ is an even integer. Then the number of different sizes of $[3,1]$-avoiders in $\ff_2^n$ given by the code-based construction is at least $\binom{\left\lfloor(n-4)/4\right\rfloor}{\left\lfloor(n-4)/8\right\rfloor}^{1/3}\ge c\cdot\frac{2^{n/12}}{\sqrt{n}}$ for some absolute constant $c>0$.
\end{restatable}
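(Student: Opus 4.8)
The plan is to reduce the statement to an explicit counting problem about weight enumerators and then to engineer a large family of codes whose evaluations at $(1,3)$ are almost all distinct. Since $n$ is even, $\ell:=n/2$ is an integer, and \autoref{thm:avoiderfromcode} with $k=3$ shows that every code $C\le\ff_2^{\ell}$ produces a $[3,1]$-avoider of size $2^n-W_C(1,3)$. Two codes give avoiders of the same size exactly when their weight enumerators agree at $(1,3)$, so it suffices to lower-bound the number of distinct values of
$$W_C(1,3)=\sum_{c\in C}3^{\,\ell-w(c)}=\sum_{w=0}^{\ell}A_w\,3^{\,\ell-w}$$
as $C$ ranges over $\ff_2^{\ell}$. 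The structural tool I would exploit is multiplicativity under direct sums: if $C=C_1\oplus C_2$ on disjoint coordinate sets, then $W_C(1,3)=W_{C_1}(1,3)\,W_{C_2}(1,3)$, so a block-structured code evaluates to the product of its block values, and the question becomes how many distinct products one can manufacture.

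Next I would build an explicit family indexed by balanced selections. Reserving a bounded number of coordinates (this is the source of the $n-4$ shift), I split the remaining coordinates into $2s=\lfloor(n-4)/4\rfloor$ equal blocks and attach codes according to which $s$ of the $2s$ blocks are selected, giving $\binom{2s}{s}$ codes in all. Two pitfalls must be confronted here, and they dictate the shape of the construction. First, a plain direct sum makes $W_C(1,3)$ depend only on how many blocks of each type occur, collapsing all balanced selections to a single value; second, with blocks of bounded length only boundedly many primes can occur among the block values, so the products live in a multiplicative lattice of bounded rank and yield merely polynomially many distinct values. Both obstructions force the blocks to be \emph{coupled} — through overlapping supports or a shared parity coordinate — into a single code whose entire weight distribution $(A_w)$, rather than a few aggregate counts, depends on the selected subset. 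The quantity $W_C(1,3)=\sum_w A_w3^{\,\ell-w}$ is then to be read as a (generalised) base-$3$ expansion, and distinct selections should be separated by comparing these expansions.

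Finally I expect the value map on the $\binom{2s}{s}$ selections to have fibres of size at most $\binom{2s}{s}^{2/3}$, which is precisely what produces the cube root: a clean way to guarantee such a bound is to arrange the coupling so that $W_C(1,3)$ factors through three independent bands of blocks, each band realising a common set $\mathcal V$ of $\binom{2s}{s}^{1/3}$ values, so that even maximal collisions in the three-fold product leave at least $|\mathcal V|$ distinct totals. The main obstacle, and the genuine technical heart, is exactly this collision control: one must show that distinct balanced selections induce sufficiently different weight distributions \emph{and} that evaluating at $y=3$ does not merge them through carrying coincidences, keeping the multiplicity below $\binom{2s}{s}^{2/3}$. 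I anticipate this is where the full flexibility of coupled (non-direct-sum) codes is essential, since the easy direct-sum and disjoint-support variants provably yield only subexponentially many values and cannot reach the target. Once the fibre bound is in place, standard estimates for the central binomial coefficient, such as $\binom{2s}{s}\ge 4^{s}/(2\sqrt{s})$ with $2s=\lfloor(n-4)/4\rfloor$ and $s=\lfloor(n-4)/8\rfloor$, convert $\binom{2s}{s}^{1/3}$ into the stated $c\cdot 2^{n/12}/\sqrt{n}$.
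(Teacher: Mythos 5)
Your reduction is correct as far as it goes: with $\ell=n/2$ and $k=3$, counting sizes of code-based $[3,1]$-avoiders is exactly counting distinct values of $W_C(1,3)$, and your diagnosis of why the naive approaches fail (direct sums collapse all balanced selections to one value; bounded-length blocks confine the products to a multiplicative lattice of bounded rank) is accurate. But the proof stops precisely where the work begins. You never specify the coupled family of codes, and the mechanism you propose for the cube root --- arranging $W_C(1,3)$ to ``factor through three independent bands'' each realising $\binom{2s}{s}^{1/3}$ common values --- is not constructed, not shown to be realisable by linear codes, and is not accompanied by any argument that evaluation at $y=3$ separates the resulting weight distributions. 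You explicitly flag the collision control as ``the genuine technical heart'' and leave it open, so what you have is a correct problem setup plus an unproven expectation, not a proof.

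For contrast, the paper resolves exactly this point by a different mechanism. It defines two length-increasing transformations $\mathtt{a}$ and $\mathtt{b}$ of codes (appending two zero coordinates, versus additionally adjoining two near-all-ones generators), and observes that the pair $\bigl(W_C(1,3),\,W_C(3,1)\bigr)$ transforms linearly under each, via the matrices $M_{\mathtt{a}}=\twomatr{9}{0}{0}{1}$ and $M_{\mathtt{b}}=\twomatr{10}{6}{6}{10}$. The coupling you were looking for is the all-ones rows in $\mathtt{b}$, which make the composite code depend on the order of the transformations. The ping-pong lemma shows $M_{\mathtt{a}},M_{\mathtt{b}}$ generate a free group, so all $T=\binom{\lfloor(n-4)/4\rfloor}{\lfloor(n-4)/8\rfloor}$ balanced words give \emph{distinct} matrices of \emph{equal} determinant $D$. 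The exponent $1/3$ then comes not from a three-fold product structure but from a two-step pigeonhole: if the top-row sums $\alpha_g+\beta_g$ already take $T^{1/3}$ values one prepends $\mathtt{a}$; otherwise $T^{2/3}$ matrices share that sum, and if $9\alpha_g+\beta_g$ takes $T^{1/3}$ values among them one appends $\mathtt{a}$; otherwise $T^{1/3}$ matrices share the entire top row $(\alpha,\beta)$, and the fixed-determinant relation $\alpha\delta_g-\beta\gamma_g=D$ forces the bottom-row sums $\gamma_g+\delta_g$ to be pairwise distinct, so prepending $\mathtt{b}$ produces $T^{1/3}$ distinct values $10(\alpha+\beta)+6(\gamma_g+\delta_g)$. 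Without some concrete substitute for this distinctness-plus-determinant argument, your proposal does not establish the theorem.
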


Note that here we got at least $\approx \frac{c}{\sqrt{n}}\cdot 1.059^n$ different sizes, but as we will remark later, by the MacWilliams identity, the number of different sizes obtainable by this method is bounded above by $c\cdot n\cdot (\sqrt{3})^n\approx c\cdot n\cdot 1.732^n$.

The outline of the paper is the following. In Section \ref{sec:notation}, we set the basic notation and definitions. In Section \ref{sec:k1avoid}, we give examples of $[k,1]$-avoiders that are built from affine subspaces using union and symmetric difference operations. We then describe a construction that uses the independent sets of a simple graph or a hypergraph. Then in Section \ref{sec:avoiderfromcode}, we present our code-based construction, which provides a proof of Theorem \ref{thm:avoiderfromcode}. We then present a coding-theoretical limitation that puts an upper bound on the number of possible sizes of $[k,1]$-avoiders resulting from this construction. Finally, Section \ref{sec:expmany31avoid} is devoted to the proof of Theorem \ref{thm:expmany}. Here we introduce two transformations of linear codes that each increase the length of a code by two, and we apply a sequence of such transformations to obtain a family of exponentially many codes. Using a group-theoretical tool, we show that these codes give rise to $[3,1]$-avoiders of exponentially many sizes.

\section{Notation and preliminaries}\label{sec:notation}

By convention, elements of $\ff_2^n$ will be considered as $1\times n$ row vectors over $\ff_2$.

Similarly to \cite{KN25}, we define the \textit{$k$-profile} $\pf_k(S)$ of a set $S\subseteq \ff_2^n$ as the set of all possible intersection sizes of $S$ and a $k$-flat of $\ff_2^n$. So $t\not\in \pf_k(S)$ implies that $|S|\not\in \Sp(n;k,t)$. We say that the $k$-profile of $S$ is \textit{even} if all of its elements are even.

In a vector space, the direct sum of the two subspaces $A$ and $B$ is denoted by $A\oplus B$.

The complement of a set $S$ will be denoted by $\overline{S}$. Recall that the \textit{symmetric difference} of the $a$ sets $S_1, S_2, ..., S_a$, denoted by $\symdiff_{i=1}^a S_i$, consists of those elements in $\bigcup_{i=1}^a S_i$ that appear in an odd number of sets among $S_1, S_2, ..., S_a$. When $a=2$ (so for the symmetric difference of two sets), we can also use the notation $S_1\triangle S_2$.

We let $[n]=\{1, ..., n\}$. If $A\subseteq [n]$ is an arbitrary subset, one can define its characteristic vector $\mathbbm{1}_A$, which is a $0$-$1$ sequence of length $n$. If $\mathbf{v}=\mathbbm{1}_A$, then for every $i\in [n]$,
\begin{equation*}
\mathbf{v}_i=
\begin{cases}
1 & \text{if $i\in A$,}\\
0 & \text{if $i\not\in A$.}\\
\end{cases}
\end{equation*}

Similarly one can define the characteristic vector $\mathbbm{1}_S$ for a subset $S\subseteq \ff_2^n$, which is a $2^n$-long $0$-$1$ sequence with entries indexed by the vectors $\mathbf{x}\in \ff_2^n$.

In this paper, an \textit{alphabet} is a finite set $\Sigma$, whose elements are called \textit{letters}. For example, $\Sigma=\{\mathtt{a},\mathtt{b}\}$. If $r\ge 0$, then a sequence of $\sigma_1\sigma_2...\sigma_r$ of $r$ letters is called a \textit{word} of length $r$, and the set of such words is denoted $\Sigma^r$. We write $\Sigma^{*}=\bigcup_{r\ge 0}\Sigma^r$ for the set of all words over $\Sigma$.

We let $\zz_{\ge 0}$ and $\zz_{>0}$ denote the set of nonnegative and positive integers, respectively.

\subsection{Notions from coding theory}\label{subsec:codingtheory}
	
Let us recall the following basic notions from coding theory. A \textit{binary linear code} $C$ of length $\ell$ and dimension $d$ is a linear subspace of $\ff_2^{\ell}$ of dimension $d$. The elements of $C$ are called \textit{codewords}, and the \textit{size} of the code is the number of codewords, which is equal to $2^d$.

The dual code of $C$ is defined as $C^{\perp}=\{\mathbf{y}\in \ff_2^{\ell}: \mathbf{y} \mathbf{c}^T=0 ~
\forall \mathbf{c}\in C\}$, i.e., it is the orthogonal complement of the subspace $C\le \ff_2^{\ell}$ with respect to the standard inner product. Its dimension is $\ell-d$.

A \textit{generator matrix} $G$ for $C$ is a $d\times \ell$ matrix whose rows form a basis for $C$, and a \textit{parity check matrix} $H$ for $C$ is a generator matrix for $C^{\perp}$, so it is an $(\ell-d)\times \ell$ matrix. If $H$ is a parity check matrix for $C$, then for every $\mathbf{y}\in \ff_2^{\ell}$, we have $\mathbf{y}\in C \Leftrightarrow H\mathbf{y}^{T}=\mathbf{0}$.

We recall the MacWilliams identity, which gives a link between the weight enumerator of a code and its dual:

\begin{lemma}\label{lem:macwilliams}
[\protect{MacWilliams identity \cite[p. 127]{MacWilliamsBook}}] For a binary linear code $C$ of dimension $d$,
$$W_{C^{\perp}}(x,y)=\frac{1}{2^d} W_C(y-x,y+x).$$
\end{lemma}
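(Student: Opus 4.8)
The plan is to prove the identity by Fourier analysis (Poisson summation) over the group $\ff_2^{\ell}$, which is the standard route to MacWilliams-type identities. The key device is the weight-indicator function $g\colon \ff_2^{\ell}\to \qq[x,y]$ defined by $g(\mathbf{u})=x^{\mathrm{wt}(\mathbf{u})}y^{\ell-\mathrm{wt}(\mathbf{u})}=\prod_{i=1}^{\ell} x^{u_i}y^{1-u_i}$, where $\mathrm{wt}(\mathbf{u})$ is the number of $1$-coordinates of $\mathbf{u}$. With this choice we have $W_C(x,y)=\sum_{\mathbf{c}\in C}g(\mathbf{c})$ and likewise $W_{C^{\perp}}(x,y)=\sum_{\mathbf{v}\in C^{\perp}}g(\mathbf{v})$, so the identity becomes a statement relating a sum over $C^{\perp}$ to a sum over $C$. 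I would work throughout in the characteristic-zero ring $\qq[x,y]$, which is important because the conclusion involves dividing by $2^d$.

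First I would set up the Hadamard transform $\hat g(\mathbf{v}):=\sum_{\mathbf{u}\in \ff_2^{\ell}}(-1)^{\mathbf{u}\mathbf{v}^T}g(\mathbf{u})$ and establish Poisson summation over the subspace $C$, namely $\sum_{\mathbf{v}\in C^{\perp}}g(\mathbf{v})=\frac{1}{|C|}\sum_{\mathbf{c}\in C}\hat g(\mathbf{c})$. This rests on the orthogonality relation $\sum_{\mathbf{c}\in C}(-1)^{\mathbf{c}\mathbf{u}^T}=|C|$ if $\mathbf{u}\in C^{\perp}$ and $0$ otherwise: the map $\mathbf{c}\mapsto (-1)^{\mathbf{c}\mathbf{u}^T}$ is a character of the group $C$, trivial precisely when $\mathbf{u}\in C^{\perp}$, and a nontrivial character of a finite group sums to zero. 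Substituting the definition of $\hat g$ into $\sum_{\mathbf{c}\in C}\hat g(\mathbf{c})$ and swapping the order of summation then produces the factor $\sum_{\mathbf{c}\in C}(-1)^{\mathbf{c}\mathbf{u}^T}$, which collapses the sum onto $C^{\perp}$ and yields the formula directly.

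The second computation is to evaluate $\hat g(\mathbf{c})$ explicitly. Because both $g$ and the character factor as products over the $\ell$ coordinates, the transform factors as well:
$$\hat g(\mathbf{c})=\prod_{i=1}^{\ell}\left(\sum_{u\in\{0,1\}}(-1)^{c_iu}x^{u}y^{1-u}\right).$$
Each factor equals $y+x$ when $c_i=0$ and $y-x$ when $c_i=1$, so $\hat g(\mathbf{c})=(y-x)^{\mathrm{wt}(\mathbf{c})}(y+x)^{\ell-\mathrm{wt}(\mathbf{c})}$. Summing over $\mathbf{c}\in C$ gives $\sum_{\mathbf{c}\in C}\hat g(\mathbf{c})=W_C(y-x,\,y+x)$ by the definition of the weight enumerator. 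Combining this with the Poisson formula and using $|C|=2^d$ gives $W_{C^{\perp}}(x,y)=\frac{1}{2^d}W_C(y-x,\,y+x)$, as claimed.

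The one point needing genuine care is the orthogonality/character step: one must justify that $\sum_{\mathbf{c}\in C}(-1)^{\mathbf{c}\mathbf{u}^T}$ really vanishes for $\mathbf{u}\notin C^{\perp}$, which is exactly where the linear (subgroup) structure of $C$ enters, together with the bookkeeping that codewords are row vectors so $\mathbf{c}\mathbf{u}^T$ is the standard inner product matching the definition of $C^{\perp}$ in \autoref{subsec:codingtheory}. Everything else is a routine coordinate-wise factorization and an interchange of summation order.
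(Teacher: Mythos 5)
Your proof is correct and complete: the Poisson-summation setup, the character orthogonality over the subgroup $C$, and the coordinate-wise factorization giving $\hat g(\mathbf{c})=(y-x)^{\mathrm{wt}(\mathbf{c})}(y+x)^{\ell-\mathrm{wt}(\mathbf{c})}$ all check out, and dividing by $|C|=2^d$ yields exactly the stated identity. Note that the paper does not prove this lemma at all --- it is quoted with a citation to MacWilliams and Sloane --- and your argument is precisely the standard Hadamard-transform proof given in that reference, so there is nothing to reconcile.
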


\subsection{Notions from group theory}

The identity element of a group $G$ will be denoted $1_G$. The notation $H\le G$ means that $H$ is a subgroup of $G$. The subgroup of $G$ generated by the two subgroups $H_1,H_2\le G$ is denoted by $\langle H_1,H_2\rangle$.

The free group over the generator set $S$ will be denoted by $F(S)$, and the free product of two groups $G_1$, $G_2$ will be denoted by $G_1\ast G_2$.

\section{$[k,1]$-avoiders}\label{sec:k1avoid}

Our first fundamental example of $[k,1]$-avoiders (for $k\ge 2$) is obtained from the following basic observations:

\begin{observations}\label{obs:subspace}
\begin{enumerate}[label=(\alph*)]
\item If $H$ is an $(n-k+1)$-flat of $\ff_2^n$, then it has an even $k$-profile, so in particular $H$ is a $[k,1]$-avoider.
\item The union of an arbitrary collection of $[k,1]$-avoiders in $\ff_2^n$ is also a $[k,1]$-avoider.
\end{enumerate}
\end{observations}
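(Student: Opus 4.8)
The plan is to treat the two parts separately: part (a) rests on a dimension count for intersecting affine subspaces, while part (b) follows from an elementary containment argument.

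For part (a), I would write $H=\mathbf{a}+U$ as a coset of an $(n-k+1)$-dimensional linear subspace $U\le \ff_2^n$, and take an arbitrary $k$-flat in the form $F=\mathbf{b}+V$, where $V\le \ff_2^n$ is a $k$-dimensional linear subspace. The first step is to observe that if $H$ and $F$ meet at a point $\mathbf{p}$, then their intersection is precisely the coset $\mathbf{p}+(U\cap V)$, that is, a flat whose direction space is $U\cap V$. The second step is the key dimension count: since $\dim U+\dim V=(n-k+1)+k=n+1$, the modular identity $\dim(U\cap V)=\dim U+\dim V-\dim(U+V)\ge (n+1)-n=1$ forces $U\cap V$ to be nontrivial. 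Hence whenever $H\cap F\neq \emptyset$, the intersection is a flat of dimension at least $1$, so its size is a power of two that is at least $2$, and in particular even; when the intersection is empty, the size is $0$, which is also even. This shows that every value in $\pf_k(H)$ is even, and since $1\notin \pf_k(H)$, the flat $H$ is a $[k,1]$-avoider.

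For part (b), let $\{S_i\}_{i\in I}$ be an arbitrary family of $[k,1]$-avoiders and set $S=\bigcup_{i\in I}S_i$. The argument is by contradiction: suppose some $k$-flat $F$ satisfies $|S\cap F|=1$, say $S\cap F=\{\mathbf{p}\}$. Since $\mathbf{p}\in S$, it lies in some $S_j$, whence $\mathbf{p}\in S_j\cap F$; but the containment $S_j\cap F\subseteq S\cap F=\{\mathbf{p}\}$ then forces $S_j\cap F=\{\mathbf{p}\}$, contradicting the assumption that $S_j$ avoids $[k,1]$-flats. Therefore $1\notin \pf_k(S)$, as desired.

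I do not anticipate a genuine obstacle in either part; the only point demanding care is the transition from the linear modular dimension formula to the affine setting in part (a), namely verifying that once the cosets of $U$ and $V$ meet, they intersect in a single coset of $U\cap V$ of the expected dimension. This passage is routine but is the crux of the argument, and everything else follows immediately from it.
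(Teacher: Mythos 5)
Your proposal is correct and follows essentially the same route as the paper: part (a) is the same dimension count (the paper phrases it via the affine span $\langle H,F_k\rangle$ of the two flats, you via the sum $U+V$ of their direction spaces, which is the same computation), and part (b) is the identical containment/contradiction argument. No gaps.
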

\begin{proof}
\begin{enumerate}[label=(\alph*)]
 \item If the affine subspace $H\subseteq \ff_2^n$ has dimension $n-k+1$, then for any $k$-flat $F_k\subseteq \ff_2^n$, either $H\cap F_k=\emptyset$, or by the dimension theorem, the intersection is a flat of dimension $\dim H+\dim F_k-\dim \left\langle H, F_k\right\rangle=n+1-\dim \left\langle H, F_k\right\rangle$. Here $\left\langle H, F_k\right\rangle \subseteq \ff_2^n$ has dimension at most $n$, therefore the dimension of $H\cap F_k$ is at least 1, and so $|H\cap F_k|\ge 2$ is an even power of two.
 \item If $\{S_i: i\in I\}$ is a collection of $[k,1]$-avoiders in $\ff_2^n$, and $S=\cup_{i\in I} S_i$, then let $F_k$ be an arbitrary $k$-flat. We show that $|F_k\cap S|=1$ cannot hold. Assume to the contrary that it does. Then there exists $i\in I$ such that $F_k\cap S_i\ne\emptyset$. Take such an $i$ and let $|F_k\cap S_i|=c$. Since $S_i$ was a $[k,1]$-avoider, we must have $c\ge 2$. However $F_k\cap S_i\subseteq F_k\cap S$, therefore $|F_k\cap S|\ge c\ge 2$ as well, giving a contradiction.\qedhere
\end{enumerate}
\end{proof}

Putting together these two observations, we obtain the following:

\begin{corollary}\label{basic_avoiders}
For an integer $k\ge 2$, let $S\subseteq \ff_2^n$ be an arbitrary union of $(n-k+1)$-flats. Then $S$ is a $[k,1]$-avoider.
\end{corollary}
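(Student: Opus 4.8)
The plan is to obtain this statement as an immediate consequence of the two parts of Observations \ref{obs:subspace}, which already contain all the content; the corollary is precisely their conjunction. So rather than arguing from scratch, I would simply chain the two facts together.

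First I would invoke part (a) of Observations \ref{obs:subspace}: every $(n-k+1)$-flat of $\ff_2^n$ has an even $k$-profile, and since $1$ is odd, no such flat can meet a $k$-flat in exactly one point; hence each individual $(n-k+1)$-flat is a $[k,1]$-avoider. This identifies the building blocks of our union as avoiders. Then I would apply part (b): writing $S=\bigcup_{i\in I}H_i$ where each $H_i$ is an $(n-k+1)$-flat, the set $S$ is a union of a collection of $[k,1]$-avoiders, and by part (b) any such union is itself a $[k,1]$-avoider. Combining the two gives that $S$ is a $[k,1]$-avoider, as claimed.

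There is no genuine obstacle here, since both ingredients are established in the preceding observations and the argument is a one-line composition. The only points worth verifying are that the hypotheses align: part (a) needs the flats to have dimension exactly $n-k+1$, so that the codimension $k-1$ forces any nonempty intersection with a $k$-flat to retain positive dimension (and hence even, in fact $\ge 2$, size); and the index set $I$ in part (b) may be arbitrary, so no finiteness or disjointness assumption is required. The empty union is also covered trivially, as $\emptyset$ meets every $k$-flat in $0\ne 1$ points. I would therefore present the proof as the single sentence obtained by applying (a) to each flat and then (b) to the union.
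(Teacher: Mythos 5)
Your proposal is correct and matches the paper exactly: the corollary is stated there as an immediate consequence of Observations \ref{obs:subspace}(a) and (b), applying (a) to each $(n-k+1)$-flat and (b) to their union, which is precisely your argument.
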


For example, any union of $(n-2)$-flats (\textit{quarter-spaces}) satisfies the $[3,1]$-avoider property. Sets obtained from Corollary \ref{basic_avoiders} will be called \textit{basic} $[k,1]$-avoiders.

The following observations help us obtain further $[k,1]$-avoiders.

\begin{observations}\label{obs:symdiff_subspaces}
  \begin{enumerate}[label=(\alph*)]
  \item Let $k,a\ge 2$ be integers and $S_1, S_2, ..., S_a\subseteq \ff_2^n$ be sets with even $k$-profiles. Then $\symdiff_{j=1}^a S_j$ also has an even $k$-profile.
  \item The symmetric difference of a finite collection of $(n-k+1)$-flats in $\ff_2^n$ is a $[k,1]$-avoider.
  \item Any set $S\subseteq \ff_2^n$ obtained in the form $S=\cup_{i=1}^b T_i$ (for $b\ge 0$), where each $T_i$ is a symmetric difference of a finite collection of $(n-k+1)$-flats, is a $[k,1]$-avoider.
  \end{enumerate}
\end{observations}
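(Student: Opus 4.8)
The plan is to prove each of the three parts in turn, with part (a) being the key technical step and parts (b) and (c) following as relatively direct consequences.

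\medskip

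\noindent\textbf{Part (a).} The goal is to show that the symmetric difference of finitely many sets with even $k$-profiles again has an even $k$-profile. The key observation is that for a fixed $k$-flat $F_k$, the intersection size $|F_k \cap S|$ is determined by the characteristic vector of $S$ restricted to the $2^k$ points of $F_k$, and that symmetric difference corresponds to addition of characteristic vectors over $\ff_2$. Concretely, for any sets $S_1,\dots,S_a$ and any point $x$, the indicator $\indicator[x \in \symdiff_{j=1}^a S_j]$ equals $\sum_{j=1}^a \indicator[x\in S_j] \pmod 2$. Summing over the points of $F_k$ gives
\begin{equation*}
|F_k \cap \symdiff_{j=1}^a S_j| \equiv \sum_{j=1}^a |F_k \cap S_j| \pmod 2.
\end{equation*}
First I would establish this congruence carefully (it is the crux: I must check that the parity of the symmetric-difference intersection equals the sum of the parities, which follows from the mod-$2$ identity above applied pointwise and summed). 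By hypothesis each $|F_k \cap S_j|$ is even, so the right-hand side is even, hence $|F_k \cap \symdiff_{j=1}^a S_j|$ is even as well. Since $F_k$ was arbitrary, every element of the $k$-profile of $\symdiff_{j=1}^a S_j$ is even. I would handle the base of the induction (or just argue directly for general $a$, since the congruence is stated for all $a$ at once) without needing a separate inductive step.

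\medskip

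\noindent\textbf{Part (b).} By Observations~\ref{obs:subspace}(a), each $(n-k+1)$-flat has an even $k$-profile. Applying part (a) to a finite collection of such flats shows that their symmetric difference has an even $k$-profile. In particular, the intersection size $1$ does not occur, so the symmetric difference is a $[k,1]$-avoider. This is immediate once part (a) is in hand.

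\medskip

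\noindent\textbf{Part (c).} Each $T_i$ is, by part (b), a $[k,1]$-avoider, and $S = \bigcup_{i=1}^b T_i$ is a union of $[k,1]$-avoiders. By Observations~\ref{obs:subspace}(b), a union of $[k,1]$-avoiders is a $[k,1]$-avoider, so $S$ is a $[k,1]$-avoider. (The case $b=0$ gives $S=\emptyset$, which is trivially a $[k,1]$-avoider.) The main obstacle in the whole argument is the parity bookkeeping in part (a); once the mod-$2$ counting identity is set up cleanly, parts (b) and (c) are short deductions combining it with the two earlier observations. Note that, unlike part (a) which preserves the \emph{full} even $k$-profile, the union operation in part (c) need not preserve evenness, so it is essential that we invoke the weaker $[k,1]$-avoider property from Observations~\ref{obs:subspace}(b) rather than attempting to carry the even-profile property through the union.
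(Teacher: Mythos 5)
Your proof is correct and follows essentially the same route as the paper: your mod-$2$ indicator identity is just an unpacked version of the paper's observation that $F_k\cap(\symdiff_j S_j)=\symdiff_j(F_k\cap S_j)$ together with the fact that a finite symmetric difference of even-cardinality sets has even cardinality, and parts (b) and (c) are deduced exactly as in the paper from Observations~\ref{obs:subspace}(a) and (b).
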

\begin{proof}
\begin{enumerate}[label=(\alph*)]
 \item For an arbitrary $k$-flat $F_k\subseteq \ff_2^n$, we have $F_k\cap (\symdiff_{j=1}^a S_j)=\symdiff_{j=1}^a (F_k\cap S_j)$. Here observe that a finite symmetric difference of even-cardinality sets has even cardinality too.
 \item By part (a) and Observation \ref{obs:subspace}(a), the symmetric difference has an even $k$-profile and so is a $[k,1]$-avoider.
 \item This follows from part (b) and Observation \ref{obs:subspace}(b). \qedhere
\end{enumerate}
\end{proof}

\subsection{Obtaining $[k,1]$-avoiders from hypergraphs}

As the following Proposition shows, a basic $[k,1]$-avoider in $\ff_2^n$ can be constructed from the independent sets of any hypergraph on $n$ vertices.

Let $H=(V,E)$ be a hypergraph with $V=[n]$ and $E\subseteq \mathcal{P}(V)$. An \textit{independent set} of $H$ is a subset $W\subseteq V$ such that no edge $e\in E$ is fully contained in $W$. Let $i(H)$ denote the number of independent sets of $H$. (If $H=G$ is a graph then $i(G)$ is also called the Fibonacci number of $G$, see \cite{PT82}.)

The \textit{rank} of a hypergraph $H$ is the maximum cardinality of a hyperedge of $H$, and is denoted by $r(H)$.

\begin{proposition}\label{prop:hypergraph_example}
	If $H=(V,E)$ is a hypergraph on $V=[n]$ with $r(H)\le k-1$, where $k\ge 3$, then there exists a $[k,1]$-avoider $S_H\subseteq \ff_2^n$ with $|S_H|=2^n-i(H)$.
\end{proposition}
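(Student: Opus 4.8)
The plan is to realize $S_H$ geometrically as a union of affine flats, one for each hyperedge, and then invoke the stability of the $[k,1]$-avoider property under unions. Throughout I identify a subset $W\subseteq[n]$ with its characteristic vector $\mathbbm{1}_W\in\ff_2^n$. I take $S_H$ to be the set of characteristic vectors of the \emph{non-independent} sets of $H$. Since $i(H)$ counts the independent subsets of $[n]$ and there are $2^n$ subsets in total, the complementary count gives $|S_H|=2^n-i(H)$ immediately, which is the asserted cardinality.

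Next I would decompose $S_H$ into flats. By definition a set $W$ fails to be independent precisely when it fully contains some hyperedge $e\in E$. For a fixed $e\in E$, the vectors $\mathbf{v}\in\ff_2^n$ with $\mathbf{v}_i=1$ for every $i\in e$ form an affine flat $F_e$ obtained by pinning down $|e|$ coordinates, so $\dim F_e=n-|e|$; collecting over all edges gives $S_H=\bigcup_{e\in E}F_e$. Because $r(H)\le k-1$, each hyperedge satisfies $|e|\le k-1$, and hence $\dim F_e=n-|e|\ge n-k+1$. A flat of dimension at least $n-k+1$ is itself a union of $(n-k+1)$-flats (partition it into cosets of an $(n-k+1)$-dimensional subspace of its direction space), so $S_H$ is a union of $(n-k+1)$-flats and Corollary \ref{basic_avoiders} applies directly, yielding that $S_H$ is a $[k,1]$-avoider. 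Alternatively, one may re-run the dimension-theorem computation of Observation \ref{obs:subspace}(a): for any $k$-flat $F_k$ meeting $F_e$ one obtains $\dim(F_e\cap F_k)\ge k-|e|\ge 1$, so the intersection is empty or of even size, giving $F_e$ an even $k$-profile, and then Observation \ref{obs:subspace}(b) closes the argument through the union.

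I expect no genuinely hard step here; the work is essentially a dictionary between the combinatorial picture (non-independent sets) and the geometric one (unions of coordinate flats). The only point that warrants a little care is that hyperedges strictly smaller than $k-1$ produce flats of dimension strictly larger than $n-k+1$, so one cannot quote Corollary \ref{basic_avoiders} verbatim without first observing that such higher-dimensional flats split into $(n-k+1)$-flats (or, equivalently, re-deriving the even-$k$-profile bound as above). Once this is noted, the cardinality computation and the union argument combine to give the statement.
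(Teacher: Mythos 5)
Your proof is correct and follows essentially the same route as the paper: define $F_e$ as the coordinate flat of dimension $n-|e|\ge n-k+1$ for each hyperedge, note that such flats decompose into $(n-k+1)$-flats so that Corollary \ref{basic_avoiders} applies to the union, and identify the complement of $\bigcup_{e\in E}F_e$ with the characteristic vectors of independent sets to get $|S_H|=2^n-i(H)$. The point you flag about edges of size strictly less than $k-1$ is exactly the same observation the paper makes in passing.
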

\begin{proof}
	For each hyperedge $e\in E$, define $F_e=\{\mathbf{x}\in \ff_2^n: \mathbf{x}_v=1 ~\forall v\in e\}$, which is an $(n-c)$-flat if $|e|=c$, and every such $(n-c)$-flat can be decomposed as a union of $(n-k+1)$-flats.
	
	Now let $S_H\subseteq \ff_2^n$ be defined as $S_H=\bigcup_{e\in H} F_e$. Then $S_H$ is equal to a union of $(n-k+1)$-flats, so is a basic $[k,1]$-avoider. We can also see that $\mathbf{x}\not\in S_H$ if and only if $\mathbf{x}$ is a characteristic vector for an independent set of $H$. Therefore, $|S_H|=2^n-i(H)$.
\end{proof}

\begin{corollary}
If $G$ is a graph on $n$ vertices, then there exists a $[3,1]$-avoider $S_G\subseteq \ff_2^n$ with $|S_G|=2^n-i(G)$.
\end{corollary}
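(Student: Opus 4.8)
The plan is to obtain this corollary as the $k=3$ specialization of Proposition \ref{prop:hypergraph_example}, so there is very little new work to do. First I would observe that an ordinary graph $G=(V,E)$ is precisely a hypergraph in which every edge has cardinality exactly two, so that $r(G)=2$ (and if $G$ happens to be edgeless, the bound on the rank is vacuous). Setting $k=3$, the rank hypothesis $r(H)\le k-1=2$ of the Proposition is therefore automatically satisfied, and the requirement $k\ge 3$ holds with equality.

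Next I would check that the two notions of independent set coincide. In the hypergraph definition used in the Proposition, a subset $W\subseteq V$ is independent if no edge is fully contained in $W$; for a rank-$2$ hypergraph this says exactly that no pair of adjacent vertices lies entirely inside $W$, which is the usual graph-theoretic condition that $W$ spans no edge of $G$. Hence the quantity $i(G)$ appearing in the Proposition is the same as the graph-theoretic count of independent sets, i.e.\ the Fibonacci number of $G$.

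With these identifications in place, applying Proposition \ref{prop:hypergraph_example} with $H=G$ and $k=3$ directly produces a $[3,1]$-avoider $S_G\subseteq \ff_2^n$ of size $|S_G|=2^n-i(G)$, which is the claim. I do not expect any genuine obstacle here: the statement is a clean instantiation of the already-established Proposition, and the only point requiring a line of justification is the bookkeeping that a graph is an admissible input hypergraph for $k=3$ and that the two independence notions match.
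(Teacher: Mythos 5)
Your proposal is correct and matches the paper's proof exactly: the paper also just applies Proposition \ref{prop:hypergraph_example} with $k=3$, and your extra bookkeeping (a graph is a rank-$2$ hypergraph and the two independence notions agree) is a harmless elaboration of the same one-line argument.
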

\begin{proof}
We apply Proposition \ref{prop:hypergraph_example} with $k=3$.
\end{proof}

Finding the number of possible values of $i(G)$ for an $n$-vertex graph $G$ is a question of independent interest as well. For a detailed presentation of the author's results together with Nagy on this problem, see \cite{indep_sets}.

\subsection{Reed-Muller code perspective}

We may look at the $[k,1]$-avoiders obtained in Observations \ref{obs:symdiff_subspaces} from a coding-theoretical perspective. The \textit{binary Reed-Muller code} $\mathrm{RM}(n,r)$ of length $2^n$ and order $r$ is generated by the characteristic vectors of $(n-r)$-flats of $\ff_2^n$ (see \cite[Chapter 13]{MacWilliamsBook}). Let us take $r=k-1$. A vector $\mathbf{x}\in \ff_2^{2^n}$ is a codeword of $\mathrm{RM}(n,k-1)$ if and only if it can be expressed as the sum of the characteristic vectors (over $\ff_2$) of some $(n-k+1)$-flats, say $\mathbf{x}=\sum_{i=1}^c \mathbf{x}^{(i)}$ where $c\ge 0$, and for each $i$, $\mathbf{x}^{(i)}=\mathbbm{1}_{F_i}$ for an $(n-k+1)$-flat $F_i\subseteq \ff_2^n$. But this is equivalent to saying that $\mathbf{x}=\mathbbm{1}_S$ for the set $S=\symdiff_{i=1}^c F_i$, therefore codewords of $\mathrm{RM}(n,k-1)$ are precisely the characteristic vectors of $[k,1]$-avoiders from Observation \ref{obs:symdiff_subspaces}(b).

For a finite number of vectors $\mathbf{y}^{(1)}, ..., \mathbf{y}^{(b)}\in \ff_2^n$, one can define their \textit{elementwise disjunction} $\mathbf{y}=\bigvee_{i=1}^b \mathbf{y}^{(i)}$, where $\mathbf{y}_j=1$ if and only if $(\mathbf{y}^{(i)})_j=1$ for at least one value $i$. Then the characteristic vectors of $[k,1]$-avoiders in Observation \ref{obs:symdiff_subspaces}(c) are precisely the vectors arising as a disjunction of some codewords of $\mathrm{RM}(n,k-1)$. Our question is then:

\begin{problem}
Determine the possible weights of the elementwise disjunction of a collection of codewords in $\mathrm{RM}(n,k-1)$.
\end{problem}

For $r=1$ this is an easy question to investigate using the affine space itself: a symmetric difference of a finite number of $(n-1)$-flats (halfspaces) is either $\emptyset$, the full space $\ff_2^n$ or an $(n-1)$-flat. So in this case, a union of such objects $S=\cup_{i=1}^b T_i$ (as in Observation \ref{obs:symdiff_subspaces}(c)) satisfies that $\overline{S}=\cap_{i=1}^b \overline{T_i}$, where $\overline{S}$ can be $\emptyset$, $\ff_2^n$ or an $(n-c)$-flat for some $1\le c\le b$. This means that the set of possible weights is $\{0, 2^n-2^{n-1}, 2^n-2^{n-2}, ..., 2^n-2^0, 2^n\}$.

To the author's knowledge, this problem has not yet been investigated for $r=2$ (corresponding to $k=3$). However the weights of the $\mathrm{RM}(n,2)$ codewords themselves are known: Sloane and Berlekamp \cite{Sloane_Berlekamp} gave the exact weight enumerator for $\mathrm{RM}(n,2)$ with the arising weights being $2^{n-1}$ and $2^{n-1}\pm 2^{n-1-j}$ for $0\le j\le \left\lfloor\frac{n}{2}\right\rfloor$. No such description is known for higher order, however there are some partial results \cite{McEliece72,KTA76,KLP12}.

\section{Obtaining $[k,1]$-avoiders from linear codes}\label{sec:avoiderfromcode}

As mentioned in the Introduction, we now show a way to explicitly construct a $[k,1]$-avoider from an arbitrary binary linear code $C$, where the size of the set can be obtained from evaluating the weight enumerator of the code at certain inputs.

\label{thm:restated_codebased}
\restateavoiderfromcode*

To prove this theorem, we build up a set $S=S_C(k)\subseteq \ff_2^n$ using the following construction:

\begin{construction}[Code-based construction]\label{con:codebased}

If $k\ge 3$ and $\ell\ge 1$ are integers, and $C\le \ff_2^{\ell}$ is a binary linear code, then we define a set $S_C(k)\subseteq \ff_2^n$ as follows, where $n=\ell(k-1)$.

Let $F_1, ..., F_{\ell}$ be the $(n-k+1)$-flats in $\ff_2^n$ given by
$$F_i=\{\mathbf{x}\in \ff_2^n: \mathbf{x}_j=1 ~\forall j: (i-1)(k-1)+1\le j\le i(k-1)\}$$
for each $1\le i\le \ell$.

For $1\le i\le \ell$, define the indicator functions $I_i: \ff_2^n\to \ff_2$ by 
\begin{equation*}
I_i(\mathbf{x})=
\begin{cases}
1 & \text{if $\mathbf{x}\in F_i$,}\\
0 & \text{if $\mathbf{x}\not\in F_i$.}\\
\end{cases}
\end{equation*}

Now define the \textit{signature} function $\mathbf{s}: \ff_2^n\to \ff_2^{\ell}$ by $\mathbf{s}(\mathbf{x})=(I_1(\mathbf{x}), ..., I_{\ell}(\mathbf{x}))$, and define the set $S_C(k)$ by
$$\mathbf{x}\in S_C(k) \Leftrightarrow \mathbf{s}(\mathbf{x})\not \in C.$$
\end{construction}

\begin{proof}[Proof of Theorem \ref{thm:avoiderfromcode}]
Consider the set $S=S_C(k)$ as defined in Construction \ref{con:codebased}.

First we determine the size of $S_C(k)$, which we will do by turning to the complement. For each codeword $\mathbf{c}=(c_1, ..., c_{\ell})\in C$, the number of elements of $\ff_2^n$ with signature $\mathbf{c}$ depends on $w(\mathbf{c})$. If it is known that $I_i(\mathbf{x})=c_i$ for each $1\le i\le \ell$, then for the values of $i$ with $c_i=1$ the corresponding $k-1$ coordinates of $\mathbf{c}$ must be all $1$ (so can take just one value), and for those with $c_i=0$ the corresponding $k-1$ coordinates can be anything except all $1$'s, so can take $2^{k-1}-1$ values. Overall,
$$|\mathbf{x}\in \ff_2^n: \mathbf{s}(\mathbf{x})=\mathbf{c}|=(2^{k-1}-1)^{\ell-w(\mathbf{c})}.$$

Altogether, we have
$$|\overline{S_C(k)}|=\sum_{\mathbf{c}\in C} (2^{k-1}-1)^{\ell-w(\mathbf{c})}=\sum_{w=0}^{\ell} A_w(2^{k-1}-1)^{\ell-w}=W_C(1, 2^{k-1}-1),$$

so $|S_C(k)|=2^n-W_C(1, 2^{k-1}-1)$ as required.

We now show that $S_C(k)$ is indeed a $[k,1]$-avoider. Let $H$ be a parity check matrix for $C$, which is an $(\ell-d)\times \ell$ matrix where $d=\dim C$. So for any $\mathbf{y}\in \ff_2^{\ell}$, $\mathbf{y}\in C \Leftrightarrow H\mathbf{y}^T=\mathbf{0}$, and so for $\mathbf{x}\in \ff_2^n$, $\mathbf{x}\not\in S_C(k) \Leftrightarrow H\mathbf{s(x)}^T=\mathbf{0}$.

Fix $1\le r\le \ell-d$, and let the set of indices $j$ with $H_{r,j}=1$ be $\{j_{r,1}, j_{r,2}, ..., j_{r,b_r}\}$, where $1\le b_r\le \ell$. A vector $\mathbf{x}\in \ff_2^n$ satisfies $(H\mathbf{s}(\mathbf{x})^T)_r=1$ if and only if $$1=\sum_{i=1}^{b_r} \mathbf{s}(\mathbf{x})_{j_{r,i}}=\sum_{i=1}^{b_r} I_{j_{r,i}}(\mathbf{x}).$$

This holds if and only if $\mathbf{x}$ lies in $T_r=\symdiff_{i=1}^{b_r} F_{j_{r,i}}$. So overall, $\mathbf{x}\in S_C(k) \Leftrightarrow \mathbf{x}\in \bigcup_{r=1}^{\ell-d} T_r$, so $S_C(k)$ is of the form described in Observation \ref{obs:symdiff_subspaces}(c). Thus it is indeed a  $[k,1]$-avoider.
\end{proof}

\begin{remark}
Instead of a binary \textit{linear} code, the statement of Theorem \ref{thm:avoiderfromcode} also holds if $C$ is a binary \textit{affine} code $C\subseteq \ff_2^{\ell}$, that is, $C$ is of the form $C=L+\mathbf{y}_0$ for some linear subspace $L\le \ff_2^{\ell}$ and $\mathbf{y}_0\in \ff_2^{\ell}$. In this case, some of the conditions $(H\mathbf{s}(\mathbf{x})^T)_r=1$ might be replaced with $(H\mathbf{s}(\mathbf{x})^T)_r=0$, which holds if $\mathbf{x}\in T'_r=T_r\triangle \ff_2^n$. Since $\ff_2^n$ is itself a symmetric difference (in particular, a disjoint union) of $(n-k+1)$-flats, Observation \ref{obs:symdiff_subspaces}(c) still applies.
\end{remark}

Now given the result of Theorem \ref{thm:avoiderfromcode}, the following question naturally arises: is it possible to use it to disprove Conjecture \ref{con:maincon} by setting $t=1$? In other words, can $W_C(1, 2^{k-1}-1)$ take $\Omega(2^n)$ distinct values for some pair $(k,\ell)$? In fact, the following two questions emerge, which might raise interest on their own right.

\begin{problem}
Determine the number of distinct weight enumerator polynomials $W_C(x,y)$ that a binary linear code $C$ of length $\ell$ can have. We denote this number by $N_{\ell}$.
\end{problem}

\begin{problem}
For a fixed pair of integers $(a,b)$, determine the number of possible values $W_C(a,b)$ that can arise for a binary linear code $C$ of length $\ell$. We denote this number by $N_{\ell}(a,b)$.
\end{problem}

The following elementary upper bound can be given for the latter question.

\begin{proposition}\label{prop:elementarybound}
Let $a,b\ne 0$ and $\ell\ge 1$ be integers, and $g=\gcd(a,b)$. Then

$$N_{\ell}(a,b)\le\begin{cases}
\left(\frac{|a|+|b|}{g}\right)^{\ell} & \text{if $ab>0$,} \\
2\left(\frac{|a|+|b|}{g}\right)^{\ell}+1 & \text{if $ab<0$.}
\end{cases}$$
\end{proposition}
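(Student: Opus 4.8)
The plan is to pin down the integer $V := W_C(a,b) = \sum_{w=0}^{\ell} A_w\, a^w b^{\ell-w}$ by two constraints---a divisibility constraint and a magnitude constraint---and then count the integers satisfying both. Throughout I would use two elementary properties of the weight coefficients valid for every binary linear code $C \le \ff_2^\ell$: first $A_0 = 1$, from the zero codeword; and second $0 \le A_w \le \binom{\ell}{w}$, since $A_w$ counts codewords of weight $w$ among the $\binom{\ell}{w}$ vectors of that weight in $\ff_2^\ell$.

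For the divisibility constraint, write $a = g a'$ and $b = g b'$ with $\gcd(a',b')=1$, so that $a^w b^{\ell-w} = g^\ell (a')^w (b')^{\ell-w}$ and hence $V = g^\ell\, W_C(a',b')$. In particular $V$ is always a multiple of $g^\ell$, and since scaling by the fixed nonzero constant $g^\ell$ is a bijection on the integers, the number of attainable values of $V$ equals the number of attainable values of $W_C(a',b')$. It therefore suffices to count the latter over the coprime pair $(a',b')$, for which $|a'| + |b'| = (|a|+|b|)/g$.

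For the magnitude constraint, the bound $A_w \le \binom{\ell}{w}$, the triangle inequality, and the binomial theorem give
\[
|W_C(a',b')| \le \sum_{w=0}^{\ell} A_w |a'|^w |b'|^{\ell-w} \le \sum_{w=0}^{\ell} \binom{\ell}{w} |a'|^w |b'|^{\ell-w} = (|a'|+|b'|)^\ell =: M .
\]
If $ab<0$, then $W_C(a',b')$ lies in the interval $[-M,M]$, which contains exactly $2M+1$ integers, yielding the second branch of the claimed bound. If $ab>0$, then $a'$ and $b'$ share a sign, so every term $A_w (a')^w (b')^{\ell-w}$ carries the same sign (positive when $a',b'>0$, and sign $(-1)^\ell$ when $a',b'<0$); consequently $W_C(a',b')$ has that one common sign. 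Moreover the $w=0$ term equals $A_0 (b')^\ell = (b')^\ell \ne 0$, so $W_C(a',b')$ is a nonzero integer of fixed sign with absolute value at most $M$, and there are only $M$ such integers, giving the first branch.

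The bulk of this is routine; the only points requiring care are the two constraints working in tandem---the magnitude bound pays for the factor $(|a|+|b|)/g$, while divisibility by $g^\ell$ is precisely what strips the extra $g^\ell$ from the naive count---and, in the $ab>0$ case, the observation that $A_0=1$ forces $W_C(a',b') \ne 0$ with a fixed sign, which is what lets us drop both the additive constant and the factor of two relative to the $ab<0$ branch.
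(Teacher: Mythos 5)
Your proposal is correct and follows essentially the same route as the paper: divisibility of $W_C(a,b)$ by $g^{\ell}$, the triangle-inequality magnitude bound $(|a|+|b|)^{\ell}$ via $A_w\le\binom{\ell}{w}$, and the use of $A_0=1$ together with the common sign of all terms to drop the factor of two and the additive constant when $ab>0$. Factoring out $g^{\ell}$ up front and counting values of $W_C(a',b')$ for the coprime pair is just a cosmetic repackaging of the paper's count of multiples of $g^{\ell}$ in the relevant interval.
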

\begin{proof}
Recall that $W_C(a,b)=\sum_{w=0}^{\ell} A_w a^w b^{\ell-w}$, where $A_w$ is the number of codewords of weight $w$ in $C$. Here $0\le A_w\le \binom{\ell}{w}$ for all $w>0$, and $A_0=1$.

If $a,b>0$, then $W_C(a,b)$ is an integer that is divisible by $g^{\ell}$ and satisfies 
$$1\le W_C(a,b)\le \sum_{w=0}^{\ell} \binom{\ell}{w} a^w b^{\ell-w}=(a+b)^{\ell},$$
giving the required bound. If $a,b<0$, then $W_C(a,b)=(-1)^{\ell}W_C(|a|,|b|)$, leading to the previous case.

In the case $ab<0$, $W_C(a,b)$ is still divisible by $g^{\ell}$, and 
$$|W_C(a,b)|\le \sum_{w=0}^{\ell} \binom{\ell}{w} |a|^w |b|^{\ell-w}=(|a|+|b|)^{\ell}$$
by the triangle inequality, leaving at most $2\left(\frac{|a|+|b|}{g}\right)^{\ell}+1$ possible values.
\end{proof}

Now we present a second way of giving an upper bound on $N_{\ell}(a,b)$ in the fashion of Proposition \ref{prop:elementarybound}, namely through the dual of the code. For some triples $(a,b,\ell)$ this results in a better bound.

\begin{proposition}\label{prop:dualbound}
Let $a,b,\ell$ be integers such that $b>|a|$ and $\ell\ge 1$. Then
$$N_{\ell}(a,b)\le (\ell+1)\cdot \left(\frac{2b}{\gcd(b-a,b+a)}\right)^{\ell}.$$
\end{proposition}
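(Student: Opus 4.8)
The plan is to pass to the dual code via the MacWilliams identity (Lemma \ref{lem:macwilliams}), thereby converting the evaluation of $W_C$ at the point $(a,b)$ into an evaluation of the \emph{dual} weight enumerator at a point with both coordinates positive, where the already-proved Proposition \ref{prop:elementarybound} applies. The hypothesis $b>|a|$ is exactly what makes this transformed point land in the positive regime.

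First I would rewrite Lemma \ref{lem:macwilliams} with $C^\perp$ in place of $C$. Since $(C^\perp)^\perp=C$ and $\dim C^\perp=\ell-d$ where $d=\dim C$, the identity becomes
$$W_C(x,y)=\frac{1}{2^{\ell-d}}\,W_{C^\perp}(y-x,\,y+x).$$
Evaluating at $(x,y)=(a,b)$ and writing $p=b-a$, $q=b+a$ yields
$$W_C(a,b)=2^{d-\ell}\,W_{C^\perp}(p,q).$$
The assumption $b>|a|$ guarantees $p>0$ and $q>0$, hence $pq>0$, which is precisely the first case covered by Proposition \ref{prop:elementarybound}. Note also $p+q=2b$ and $\gcd(p,q)=\gcd(b-a,b+a)$, so the target quantity $2b/\gcd(b-a,b+a)$ is exactly $(p+q)/\gcd(p,q)$.

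Next I would organise the count by the dimension of the dual code. Setting $e=\dim C^\perp$, the prefactor $2^{d-\ell}=2^{-e}$ depends only on $e$; so for a fixed value of $e$ the number $W_C(a,b)$ is a fixed nonzero scalar multiple of $W_{C^\perp}(p,q)$. Multiplication by a nonzero constant is injective, so the number of distinct values of $W_C(a,b)$ obtained from codes $C$ with $\dim C^\perp=e$ equals the number of distinct values of $W_{C^\perp}(p,q)$ obtained from those codes. As $C$ runs over all length-$\ell$ codes with $\dim C^\perp=e$, its dual $C^\perp$ runs over all length-$\ell$ codes of dimension $e$; hence this number is at most $N_\ell(p,q)$.

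Finally I would take a union bound over the $\ell+1$ admissible dimensions $e\in\{0,1,\dots,\ell\}$ and invoke Proposition \ref{prop:elementarybound} for the positive pair $(p,q)$, which gives $N_\ell(p,q)\le\bigl((p+q)/\gcd(p,q)\bigr)^\ell=\bigl(2b/\gcd(b-a,b+a)\bigr)^\ell$, so that
$$N_\ell(a,b)\le\sum_{e=0}^{\ell}N_\ell(p,q)=(\ell+1)\left(\frac{2b}{\gcd(b-a,b+a)}\right)^{\ell}.$$
I do not expect a genuine obstacle: the argument is essentially a change of variables through duality followed by the earlier elementary bound. The only point requiring care is the bookkeeping in the union bound — one must split by the dual dimension $e$ rather than by $\dim C$, precisely because the scaling factor $2^{-e}$ varies with $e$ and could otherwise both create and collapse values across dimensions. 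It is exactly this splitting that produces the harmless linear factor $\ell+1$ in the final bound.
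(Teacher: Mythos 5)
Your proof is correct and follows essentially the same route as the paper: apply the MacWilliams identity to write $W_C(a,b)=2^{d-\ell}W_{C^\perp}(b-a,b+a)$, note that $b>|a|$ places $(b-a,b+a)$ in the positive case of Proposition \ref{prop:elementarybound}, and multiply the $\ell+1$ possible prefactors by the at most $\left(\frac{2b}{\gcd(b-a,b+a)}\right)^{\ell}$ values of the dual evaluation. Your explicit bookkeeping of the union bound over the dual dimension is a slightly more careful write-up of the same two-factor counting argument the paper uses.
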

\begin{proof}
We use the MacWilliams identity (Lemma \ref{lem:macwilliams}): if the code $C\le \ff_2^{\ell}$ has dimension $d$, then 
$$W_C(a,b)=\frac{1}{2^{\ell-d}} W_{C^{\perp}}(b-a,b+a).$$

Since $0\le d\le \ell$, the first factor can take $\ell+1$ distinct values, and by Proposition \ref{prop:elementarybound}, the second factor can take at most $\left(\frac{2b}{\gcd(b-a,b+a)}\right)^{\ell}$ distinct values, giving the result.
\end{proof}

As we will now see, this shows that we cannot disprove Conjecture \ref{con:maincon} by solely using our code-based construction. In fact, we have the following stronger statement.

\begin{corollary}\label{cor:macwilliams_limit}
If $k\ge 3$ is a fixed integer and $k-1\mid n$, then as $n\to \infty$, the number of possible sizes of $[k,1]$-avoiders in $\ff_2^n$ obtainable using the code-based construction is $O(nc_k^n)$, where $c_k<2$ is a fixed constant depending on $k$.
\end{corollary}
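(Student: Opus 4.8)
The plan is to reduce the statement to a direct application of Proposition \ref{prop:dualbound}. By Theorem \ref{thm:avoiderfromcode}, for $n=\ell(k-1)$ every $[k,1]$-avoider produced by the code-based construction has size $2^n - W_C(1, 2^{k-1}-1)$ for some binary linear code $C\le \ff_2^{\ell}$. Hence the number of distinct sizes is exactly the number of distinct values of $W_C(1, 2^{k-1}-1)$ as $C$ ranges over codes of length $\ell = n/(k-1)$; that is, it equals $N_{\ell}(a,b)$ with $a=1$ and $b=2^{k-1}-1$. So the entire task becomes bounding $N_{\ell}(1, 2^{k-1}-1)$ and translating the exponent back into a function of $n$.

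First I would record why the elementary estimate of Proposition \ref{prop:elementarybound} is useless here: with $a=1$, $b=2^{k-1}-1$ and $g=\gcd(1,2^{k-1}-1)=1$, it gives at most $(|a|+|b|)^{\ell}=(2^{k-1})^{n/(k-1)}=2^n$ values, i.e. the trivial bound. Since the whole content of the corollary is to beat $2^n$, the dual route is essential. I would therefore invoke Proposition \ref{prop:dualbound}, whose hypothesis $b>|a|$ holds because $b=2^{k-1}-1\ge 3>1$ for every $k\ge 3$.

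The key computation is then the gcd appearing in that proposition. With $a=1$ and $b=2^{k-1}-1$ we have $b-a=2^{k-1}-2$ and $b+a=2^{k-1}$, so $\gcd(b-a,b+a)=\gcd(2^{k-1}-2,2^{k-1})$. Writing $2^{k-1}-2=2(2^{k-2}-1)$ with $2^{k-2}-1$ odd shows this gcd equals $2$. Substituting into Proposition \ref{prop:dualbound} yields
$$N_{\ell}(1,2^{k-1}-1)\le (\ell+1)\left(\frac{2(2^{k-1}-1)}{2}\right)^{\ell}=(\ell+1)\,(2^{k-1}-1)^{\ell}.$$

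Finally I would set $\ell=n/(k-1)$ and $c_k=(2^{k-1}-1)^{1/(k-1)}$, so that $(2^{k-1}-1)^{\ell}=c_k^n$ and $\ell+1=O(n)$, giving $N_{\ell}(1,2^{k-1}-1)=O(nc_k^n)$. The inequality $c_k<2$ is immediate from $2^{k-1}-1<2^{k-1}$, since this gives $c_k=(2^{k-1}-1)^{1/(k-1)}<(2^{k-1})^{1/(k-1)}=2$. There is no genuine obstacle: the only points needing care are the evaluation of $\gcd(2^{k-1}-2,2^{k-1})$ and the observation that the elementary bound collapses to the trivial $2^n$. The factor of $2$ extracted from that gcd is precisely what pushes the base of the exponential strictly below $2$, which is the whole source of the saving.
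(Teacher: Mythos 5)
Your proposal is correct and follows essentially the same route as the paper: reduce to bounding $N_{\ell}(1,2^{k-1}-1)$ via Theorem \ref{thm:avoiderfromcode}, apply Proposition \ref{prop:dualbound} with $a=1$, $b=2^{k-1}-1$, and set $c_k=(2^{k-1}-1)^{1/(k-1)}<2$. Your explicit verification that $\gcd(2^{k-1}-2,2^{k-1})=2$ and your remark that the elementary bound of Proposition \ref{prop:elementarybound} only gives the trivial $2^n$ are correct details that the paper leaves implicit.
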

\begin{proof}
Let us apply Proposition \ref{prop:dualbound} with $a=1$, $b=2^{k-1}-1$, and $\ell=\frac{n}{k-1}$. We get 
$$N_{\ell}(1,2^{k-1}-1)\le (\ell+1)(2^{k-1}-1)^{\ell},$$
so via Theorem \ref{thm:avoiderfromcode}, we get the same upper bound on the number of sizes of $[k,1]$-avoiders obtainable using the code-based construction.

Since $\ell=\frac{n}{k-1}$, this upper bound is $O(nc_k^n)$, where $c_k=(2^{k-1}-1)^{1/(k-1)}<2$.
\end{proof}

Note that in particular, $c_3=\sqrt{3}$, which corresponds to the case of $[3,1]$-avoiders.

Having seen this exponential upper bound on the number of possible values of $W_C(1, 2^{k-1}-1)$ (equivalently, on the number of sizes of $[k,1]$-avoiders obtained from the code-based construction), it is natural to ask to determine the asymptotic value of this number, or at least to prove a reasonable lower bound on it. In the next section, we focus on the case $k=3$, for which we give an argument leading to an exponential lower bound. Observe that as any $k$-flat for $k\ge 4$ can be partitioned into $3$-flats, any $[3,1]$-avoider is also a $[k,1]$-avoider for all $k\ge 4$, but our code-based construction explicitly gives further $[k,1]$-avoiders as well.

\section{$[3,1]$-avoiders of exponentially many sizes via the code-based construction}\label{sec:expmany31avoid}

Now we will use the code-based construction demonstrated in the previous section to obtain $[3,1]$-avoiders of exponentially many different sizes, whose corresponding codes $C\le \ff_2^{\ell}$ come from a systematically described family of codes. Hence we devote this section to proving Theorem \ref{thm:expmany}, which we recall here.

\restateexpmany*

First we prove a lemma showing how the weight enumerator of a code $C$ changes when certain transformations are applied to $C$. We introduce the following two transformations:

\begin{definition}[Base code transformations]\label{def:basecodetransformation}
Let $C\le \ff_2^L$ be a linear code. Then:

\vspace{-1mm}
\begin{itemize}
\itemsep0em
\item we let $\mathtt{a}(C)\le \ff_2^{L+2}$ be the linear code obtained from $C$ by appending two $0$ coordinates to the end of each codeword,
\item we let $\mathtt{b}(C)\le \ff_2^{L+2}$ be the linear code obtained as $\mathtt{b}(C)=\mathtt{a}(C)\oplus\left\langle (1,1,...,1,1,0), (1,1,...,1,0,1) \right\rangle$.
\end{itemize}
\end{definition}

\begin{lemma}\label{lem:transf}
 If $C\le \ff_2^L$ is a linear code, then:

\vspace{-1mm}
\begin{enumerate}[label=(\alph*)]
\itemsep0em
\item $W_{\mathtt{a}(C)}(x,y)=y^2W_C(x,y)$,
\item $W_{\mathtt{b}(C)}(x,y)=(x^2+y^2)W_C(x,y)+2xyW_C(y,x)$.
\end{enumerate}
\end{lemma}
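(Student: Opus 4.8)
The plan is to prove both identities by directly expanding the weight enumerators of $\mathtt{a}(C)$ and $\mathtt{b}(C)$ from the definitions, tracking how the weight of each codeword changes under the two appended coordinates.

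Part (a) should be immediate. Every codeword of $\mathtt{a}(C)$ has the form $(\mathbf{c},0,0)$ with $\mathbf{c}\in C$, and appending two zeros leaves the weight unchanged while raising the length from $L$ to $L+2$. Hence each monomial $x^{w(\mathbf{c})}y^{L-w(\mathbf{c})}$ of $W_C$ is replaced by $x^{w(\mathbf{c})}y^{L+2-w(\mathbf{c})}$, which is exactly multiplication by $y^2$, giving $W_{\mathtt{a}(C)}(x,y)=y^2W_C(x,y)$.

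For part (b) the first step is to record the coset structure of $\mathtt{b}(C)=\mathtt{a}(C)\oplus\langle \mathbf{g}_1,\mathbf{g}_2\rangle$, where $\mathbf{g}_1=(1,\ldots,1,1,0)$ and $\mathbf{g}_2=(1,\ldots,1,0,1)$. I would first observe that the sum really is direct at the level of coordinates: every element of $\mathtt{a}(C)$ vanishes on the last two positions, whereas the three nonzero elements $\mathbf{g}_1$, $\mathbf{g}_2$, $\mathbf{g}_1+\mathbf{g}_2=(0,\ldots,0,1,1)$ of $\langle\mathbf{g}_1,\mathbf{g}_2\rangle$ have last-two-coordinate patterns $(1,0)$, $(0,1)$, $(1,1)$ respectively. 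Therefore each codeword of $\mathtt{b}(C)$ is uniquely $(\mathbf{c},0,0)+\mathbf{v}$ with $\mathbf{c}\in C$ and $\mathbf{v}\in\langle\mathbf{g}_1,\mathbf{g}_2\rangle$, so no codeword is double-counted in the enumeration. Next, for a fixed $\mathbf{c}\in C$ of weight $w=w(\mathbf{c})$ I would compute the weights of the four translates: the representative $(\mathbf{c},0,0)$ has weight $w$; adding $\mathbf{g}_1$ or $\mathbf{g}_2$ replaces $\mathbf{c}$ by its complement $\mathbf{c}+(1,\ldots,1)$ of weight $L-w$ and sets exactly one of the two new coordinates to $1$, giving weight $L-w+1$ in both cases; and adding $\mathbf{g}_1+\mathbf{g}_2$ leaves the first $L$ coordinates unchanged while setting both new coordinates to $1$, giving weight $w+2$. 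Summing the four resulting monomials over all $\mathbf{c}\in C$ and grouping, the representative term contributes $y^2W_C(x,y)$, the $\mathbf{g}_1+\mathbf{g}_2$ term contributes $x^2W_C(x,y)$, and the two complement terms together contribute $2xy\sum_{\mathbf{c}\in C}x^{L-w}y^{w}$. Recognizing this last sum as $W_C(y,x)$ then yields $(x^2+y^2)W_C(x,y)+2xyW_C(y,x)$, as claimed.

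I expect no substantial obstacle; the only delicate point is bookkeeping. One must simultaneously track the change in length from $L$ to $L+2$ and the complementation $\mathbf{c}\mapsto\mathbf{c}+(1,\ldots,1)$, and then correctly identify $\sum_{\mathbf{c}}x^{L-w}y^{w}$ as the \emph{reflected} enumerator $W_C(y,x)$ rather than $W_C(x,y)$, since swapping $x$ and $y$ in $W_C$ is precisely what interchanges the exponents $w$ and $L-w$.
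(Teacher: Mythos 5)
Your proof is correct and follows essentially the same route as the paper: part (a) by noting the weight is preserved while the length grows by two, and part (b) by listing the four codewords $(\mathbf{c},0,0)$, $(\mathbf{c}+\mathbf{e},1,0)$, $(\mathbf{c}+\mathbf{e},0,1)$, $(\mathbf{c},1,1)$ in each coset and summing their monomials, with the complemented terms assembling into $2xyW_C(y,x)$. Your explicit check that the sum is direct (via the last-two-coordinate patterns) is a small point the paper leaves implicit, but otherwise the arguments coincide.
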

\begin{proof}
\begin{enumerate}[label=(\alph*)]
\item The statement is immediate by noting that the transformation leaves the weight of each codeword intact but increases the length of the code by two.
\item For every codeword $\mathbf{c}\in C$, the four distinct codewords $(\mathbf{c},0,0)$, $(\mathbf{c}+\mathbf{e},0,1)$, $(\mathbf{c}+\mathbf{e},1,0)$ and $(\mathbf{c},1,1)$ appear in $\mathtt{b}(C)$, where $\mathbf{e}=(1,1,...,1)\in \ff_2^L$. Therefore
\begin{equation*}
\begin{split}
W_{\mathtt{b}(C)}(x,y) &=\sum_{\mathbf{c'}\in \mathtt{b}(C)} x^{w(\mathbf{c'})}y^{L+2-w(\mathbf{c'})} \\
&=\sum_{\mathbf{c}\in C} \left(x^{w(\mathbf{c})}y^{L+2-w(\mathbf{c})} + x^{L-w(\mathbf{c})+1} y^{w(\mathbf{c})+1} + x^{L-w(\mathbf{c})+1} y^{w(\mathbf{c})+1} + x^{w(\mathbf{c})+2}y^{L-w(\mathbf{c})}\right),
\end{split}
\end{equation*}
where the four terms of the latter sum correspond to the aforementioned four codewords obtained from $\mathbf{c}$. This gives
\[W_{\mathtt{b}(C)}(x,y)=y^2W_C(x,y)+xyW_C(y,x)+xyW_C(y,x)+x^2W_C(x,y). \qedhere\]
\end{enumerate}
\end{proof}

Figure \ref{fig:transformations} shows the effect of the transformations $\mathtt{a}$ and $\mathtt{b}$ on a generator matrix of a code $C$.

\begin{figure}[H]
\centering
\begin{minipage}{0.3\textwidth}
	\centering
	$G_C=\begin{pmatrix}
		1 & 0 & 0 & 1 \\
		0 & 1 & 0 & 1 \\
		1 & 1 & 1 & 0
	\end{pmatrix}$
\end{minipage}
\begin{minipage}{0.3\textwidth}
	\centering
	$G_{\mathtt{a}(C)}=\begin{pmatrix}
	1 & 0 & 0 & 1 & \textcolor{red}{0} & \textcolor{red}{0} \\
	0 & 1 & 0 & 1 & \textcolor{red}{0} & \textcolor{red}{0} \\
	1 & 1 & 1 & 0 & \textcolor{red}{0} & \textcolor{red}{0}
	\end{pmatrix}$
\end{minipage}
\begin{minipage}{0.3\textwidth}
	\centering
	$G_{\mathtt{b}(C)}=\begin{pmatrix}
	1 & 0 & 0 & 1 & \textcolor{red}{0} & \textcolor{red}{0} \\
	0 & 1 & 0 & 1 & \textcolor{red}{0} & \textcolor{red}{0} \\
	1 & 1 & 1 & 0 & \textcolor{red}{0} & \textcolor{red}{0} \\
	\textcolor{blue}{1} & \textcolor{blue}{1} & \textcolor{blue}{1} & \textcolor{blue}{1} & \textcolor{purple}{1} & \textcolor{purple}{0} \\
	\textcolor{blue}{1} & \textcolor{blue}{1} & \textcolor{blue}{1} & \textcolor{blue}{1} & \textcolor{purple}{0} & \textcolor{purple}{1}
	\end{pmatrix}$
\end{minipage}

\caption{An example for the generating matrices of a code $C$ and the transformed codes $\mathtt{a}(C)$ and $\mathtt{b}(C)$.}
\label{fig:transformations}	
\end{figure}

We are going to use various finite sequences formed from the transformations $\mathtt{a}$ and $\mathtt{b}$. So for a word $f\in \{\mathtt{a},\mathtt{b}\}^r$ (where $r\in \zz_{\ge 0}$), say $f=\sigma_1\sigma_2...\sigma_r$ where each $\sigma_i\in \{\mathtt{a},\mathtt{b}\}$, define $f(C)=(\sigma_1\circ \sigma_2\circ ...\circ \sigma_r)(C)$. Using Figure \ref{fig:transformations}, for $f\in \{\mathtt{a},\mathtt{b}\}^r$ and $C\le \ff_2^L$, it is straightforward to determine the generator matrix of the code $f(C)\le \ff_2^{L+2r}$. Since $\mathtt{a}$ leaves the dimension unchanged whereas $\mathtt{b}$ increases it by $2$, $\dim f(C)=\dim C+2n_{\mathtt{b}}$ for a word $f$ containing $n_{\mathtt{b}}$ letters $\mathtt{b}$.

Using Theorem \ref{thm:avoiderfromcode} for $k=3$, if $\ell\ge 1$ is fixed and $n=2\ell$, then we get a $[3,1]$-avoider in $\ff_2^n$ of size $2^n-W_C(1,3)$ for any linear code $C\le \ff_2^{\ell}$. For an integer $r\ge 1$, we can start from the trivial code $C_0\le\ff_2^0$, and we can apply a sequence $f$ of $r$ transformations of the form $\mathtt{a}$ or $\mathtt{b}$ as described above, getting a code $f(C_0)\le \ff_2^{2r}$. Using Lemma \ref{lem:transf}, it is natural to track the value of not just $W_C(1,3)$, but also $W_C(3,1)$ during these transformations. To do this, we define the following auxiliary vector for a code $C$.

\begin{notation}
If $C\le \ff_2^L$ is a linear code, let $\mathbf{v}_C=\twovec{W_C(1,3)}{W_C(3,1)}$.
\end{notation}

Using this notation, one can read off the following from Lemma \ref{lem:transf}.

\begin{observation}\label{obs:twomatr}
For any linear code $C\le \ff_2^L$, we have $\mathbf{v}_{\mathtt{a}(C)}=\begin{pmatrix}9 & 0\\0 & 1\end{pmatrix}\mathbf{v}_C$ and $\mathbf{v}_{\mathtt{b}(C)}=\begin{pmatrix}10 & 6\\6 & 10\end{pmatrix}\mathbf{v}_C$.
\end{observation}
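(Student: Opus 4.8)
The plan is to read off the two matrix identities directly from Lemma~\ref{lem:transf}, since the observation is essentially just that lemma specialised to the two evaluation points $(x,y)=(1,3)$ and $(x,y)=(3,1)$. The key idea is that both coordinates of $\mathbf{v}_C$ are needed precisely because the transformation $\mathtt{b}$ mixes $W_C(x,y)$ with $W_C(y,x)$: evaluating part~(b) of the lemma at $(1,3)$ produces a term involving $W_C(3,1)$, so a single evaluation point does not close up under the transformation, whereas the ordered pair $(W_C(1,3),W_C(3,1))$ does.

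First I would handle the transformation $\mathtt{a}$. By Lemma~\ref{lem:transf}(a), $W_{\mathtt{a}(C)}(x,y)=y^2 W_C(x,y)$. Substituting $(x,y)=(1,3)$ gives $W_{\mathtt{a}(C)}(1,3)=9\,W_C(1,3)$, and substituting $(x,y)=(3,1)$ gives $W_{\mathtt{a}(C)}(3,1)=1\cdot W_C(3,1)=W_C(3,1)$. Writing these two scalar equations as a single vector equation yields
\[
\mathbf{v}_{\mathtt{a}(C)}=\begin{pmatrix}9 & 0\\0 & 1\end{pmatrix}\mathbf{v}_C,
\]
which is the first claimed identity.

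Next I would treat $\mathtt{b}$ the same way using Lemma~\ref{lem:transf}(b), namely $W_{\mathtt{b}(C)}(x,y)=(x^2+y^2)W_C(x,y)+2xy\,W_C(y,x)$. At $(x,y)=(1,3)$ the coefficient $x^2+y^2$ equals $10$ and $2xy$ equals $6$, while the swapped evaluation $W_C(y,x)=W_C(3,1)$ appears; this gives $W_{\mathtt{b}(C)}(1,3)=10\,W_C(1,3)+6\,W_C(3,1)$. At $(x,y)=(3,1)$ the coefficients are again $x^2+y^2=10$ and $2xy=6$, but now the swapped term is $W_C(1,3)$, giving $W_{\mathtt{b}(C)}(3,1)=10\,W_C(3,1)+6\,W_C(1,3)$. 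Assembling these into vector form produces exactly
\[
\mathbf{v}_{\mathtt{b}(C)}=\begin{pmatrix}10 & 6\\6 & 10\end{pmatrix}\mathbf{v}_C,
\]
completing the proof.

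There is no real obstacle here; the only point requiring any care is bookkeeping the swap $W_C(x,y)\leftrightarrow W_C(y,x)$ correctly when evaluating part~(b) at the two points, so that the off-diagonal entries land symmetrically as $6$ in both rows. Once the symmetry of $x^2+y^2$ and $2xy$ under interchanging $x$ and $y$ is noted, both rows of the $\mathtt{b}$-matrix follow from the same two coefficient computations, and the observation is immediate.
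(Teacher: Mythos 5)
Your proposal is correct and is exactly the computation the paper intends: the observation is stated as something one can ``read off'' from Lemma~\ref{lem:transf}, and your substitution of $(x,y)=(1,3)$ and $(x,y)=(3,1)$ into parts (a) and (b), with the careful tracking of the swap $W_C(x,y)\leftrightarrow W_C(y,x)$, is precisely that reading. No issues.
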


Now let $M_{\mathtt{a}}=\begin{pmatrix}9 & 0\\0 & 1\end{pmatrix}$ and $M_{\mathtt{b}}=\begin{pmatrix}10 & 6\\6 & 10\end{pmatrix}$. Take the group homomorphism $\varphi: F(\{\mathtt{a},\mathtt{b}\})\to GL_2(\qq)$ induced by $\varphi(\mathtt{a})=M_{\mathtt{a}}$ and $\varphi(\mathtt{b})=M_{\mathtt{b}}$. 

For any $f\in \{\mathtt{a},\mathtt{b}\}^*$, use the notation $M_f:=\varphi(f)$. In particular, for a word $f=\sigma_1\sigma_2...\sigma_r\in \{\mathtt{a},\mathtt{b}\}^r$, we have $M_f=M_{\sigma_1}M_{\sigma_2}...M_{\sigma_r}$. It is a clear consequence of Observation \ref{obs:twomatr} that for a code $C\le \ff_2^L$, and $f\in \{\mathtt{a},\mathtt{b}\}^*$, we have $\mathbf{v}_{f(C)}=M_f\mathbf{v}_C$.

For the trivial code $C_0\le \ff_2^0$, we have $\mathbf{v}_{C_0}=\twovec{1}{1}$, and after applying the $r$ transformations, we get $\mathbf{v}_{f(C_0)}=M_f \twovec{1}{1}=\twovec{W_{f(C_0)}(1,3)}{W_{f(C_0)}(3,1)}$.

We will prove that the number of distinct values that the first entry of $M_f\twovec{1}{1}$ can take grows exponentially in $r$. For this, the main idea is to show that $M_f$ is distinct for each $f\in \{\mathtt{a},\mathtt{b}\}^r$, referring to the following well-known group-theoretical result. It was first used by Klein, and we state the modern formulation of de la Harpe \cite[pp. 25-26]{Harpe}.

\begin{lemma}[Ping-pong lemma \cite{Harpe}]\label{lem:pingpong}
 Let $G$ be a group acting on a set $X$, and let $\Gamma_1,\Gamma_2\le G$ with $|\Gamma_1|\ge 3$ and $|\Gamma_2|\ge 2$. Let $\Gamma=\left\langle \Gamma_1, \Gamma_2\right\rangle\le G$.
 Suppose that $\emptyset\ne X_1,X_2$ are two subsets of $X$ with $X_2\not\subseteq X_1$, satisfying
 \begin{enumerate}[label=(\arabic*)]
    \item $\gamma(X_2)\subseteq X_1$ for all $1\ne \gamma\in \Gamma_1$, and
    \item $\gamma(X_1)\subseteq X_2$ for all $1\ne \gamma\in \Gamma_2$.
 \end{enumerate}
 
 Then $\Gamma=\Gamma_1\ast \Gamma_2$, that is, $\Gamma$ is the free product of the two groups.
\end{lemma}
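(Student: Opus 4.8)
The plan is to prove the lemma in the classical ping-pong fashion. The inclusion-induced homomorphism $\Gamma_1\ast\Gamma_2\to\Gamma$ that sends each free factor onto the corresponding subgroup of $G$ is automatically surjective, since $\Gamma=\langle\Gamma_1,\Gamma_2\rangle$ is generated by the two subgroups. So the whole content is to prove injectivity, which by the normal form theorem for free products is equivalent to the statement that every nonempty \emph{reduced} word evaluates, as an element of $G$, to something other than $1_G$. By a reduced word I mean a product $w=g_1g_2\cdots g_m$ with $m\ge 1$, each syllable $g_i\in(\Gamma_1\setminus\{1\})\cup(\Gamma_2\setminus\{1\})$, and consecutive syllables lying in different factors. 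To detect non-triviality I would let $w$ act on $X$ and follow the images of $X_1$ and $X_2$ under its syllables.

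First I would settle the \textbf{core case}, where $w$ both begins and ends with a nontrivial element of $\Gamma_1$, say $w=a_0b_1a_1\cdots b_sa_s$ with $a_i\in\Gamma_1\setminus\{1\}$ and $b_i\in\Gamma_2\setminus\{1\}$. Feeding $X_2$ into $w$ from the right and applying the hypotheses alternately, condition (1) gives $a_s(X_2)\subseteq X_1$, then condition (2) gives $b_s(X_1)\subseteq X_2$, then (1) again gives $a_{s-1}(X_2)\subseteq X_1$, and so on; because the syllables alternate between the two factors, the two inclusions interlock perfectly, and after all of $w$ has been applied we obtain $w(X_2)\subseteq X_1$. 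Since $X_2\not\subseteq X_1$, if $w$ acted as the identity we would get $X_2=w(X_2)\subseteq X_1$, a contradiction; hence $w\ne 1_G$ in this case.

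Next I would reduce the three remaining types of reduced word, those beginning or ending in $\Gamma_2$, to the core case by \emph{conjugation}. If $w$ begins and ends in $\Gamma_2$, I pick any $a\in\Gamma_1\setminus\{1\}$; then $awa^{-1}$ is a reduced word of the core type, so $awa^{-1}\ne 1_G$ and therefore $w\ne 1_G$. If $w$ begins in $\Gamma_1$ with first syllable $a_0$ and ends in $\Gamma_2$, I instead choose $a\in\Gamma_1\setminus\{1,a_0^{-1}\}$, so that $awa^{-1}$ still begins with the nontrivial $\Gamma_1$-syllable $aa_0$ and ends with $a^{-1}\in\Gamma_1\setminus\{1\}$, again landing in the core case; the mirror-image choice handles words beginning in $\Gamma_2$ and ending in $\Gamma_1$. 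This is precisely where $|\Gamma_1|\ge 3$ enters: the two-element set $\{1,a_0^{-1}\}$ must still leave a usable element of $\Gamma_1$. The weaker assumption $|\Gamma_2|\ge 2$ only guarantees that $\Gamma_2$ is nontrivial, so that the $b_i$-syllables above exist at all.

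The main obstacle, and the sole point requiring genuine care, is the bookkeeping in this reduction step: for each conjugated word $awa^{-1}$ I must verify that it is genuinely reduced (no two adjacent syllables collapse into the same factor and no syllable becomes trivial) and that it really begins and ends in $\Gamma_1$, so that the core case applies verbatim. The alternating ping-pong computation itself is routine once the indexing is fixed; the subtlety is entirely in choosing the conjugator so as to avoid the one forbidden value $a_0^{-1}$, which is exactly what the hypothesis $|\Gamma_1|\ge 3$ provides.
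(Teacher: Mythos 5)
The paper does not prove this lemma itself---it is quoted verbatim from de la Harpe \cite{Harpe} as a known result---but your argument is the standard (and correct) proof of it: reduce to showing every nonempty reduced word acts nontrivially, handle the words beginning and ending in $\Gamma_1$ by the alternating inclusion $w(X_2)\subseteq X_1$ together with $X_2\not\subseteq X_1$, and conjugate the remaining word types into that core case, using $|\Gamma_1|\ge 3$ to dodge the forbidden conjugator $a_0^{-1}$ (resp.\ $a_s$). This matches the classical argument in the cited source, so there is nothing to correct.
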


In \cite{Harpe}, a classical application of the ping-pong lemma is then presented, showing that $\twomatr{1}{2}{0}{1}$ and $\twomatr{1}{0}{2}{1}$ generate a free subgroup of $SL_2(\zz)$ of rank two. In $GL_2(\qq)$, we now adapt the same argument to our matrices $M_{\mathtt{a}}$ and $M_{\mathtt{b}}$.

\begin{proposition}\label{prop:freesubgroup}
The matrices $M_{\mathtt{a}}=\twomatr{9}{0}{0}{1}$ and $M_{\mathtt{b}}=\twomatr{10}{6}{6}{10}$ generate a free subgroup of $GL_2(\qq)$ of rank two.
\end{proposition}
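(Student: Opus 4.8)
The plan is to apply the ping-pong lemma (Lemma~\ref{lem:pingpong}) to the infinite cyclic groups $\Gamma_1=\langle M_{\mathtt a}\rangle$ and $\Gamma_2=\langle M_{\mathtt b}\rangle$, acting on the set $X$ of directions (lines through the origin) of $\rr^2$, i.e. on the real projective line $\mathbb{P}^1(\rr)$. First I would record the spectral data. The matrix $M_{\mathtt a}=\twomatr{9}{0}{0}{1}$ is diagonal with eigendirections the two coordinate axes, while $M_{\mathtt b}=\twomatr{10}{6}{6}{10}$ has eigenvalues $16$ and $4$ with eigenvectors $(1,1)$ and $(1,-1)$, so its eigendirections are the two diagonals. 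Since no eigenvalue is a root of unity, both matrices have infinite order; hence $\Gamma_1\cong\Gamma_2\cong\zz$, giving $|\Gamma_1|\ge 3$ and $|\Gamma_2|\ge 2$ as the lemma requires. Crucially, the four eigendirections are pairwise distinct (the axes at slopes $0,\infty$ and the diagonals at slopes $\pm1$), which is what makes a ping-pong separation possible.

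Writing a direction by its slope $s=y/x\in\rr\cup\{\infty\}$, I would set $X_1=\{|s|<1/3\}\cup\{|s|>3\}$ (directions near the coordinate axes) and $X_2=\{1/3<|s|<3\}$ (directions near the diagonals); these are nonempty and disjoint, so $X_2\not\subseteq X_1$. The first condition $M_{\mathtt a}^{\,n}(X_2)\subseteq X_1$ for all $n\ne0$ is immediate, since $M_{\mathtt a}$ acts on slopes by $s\mapsto s\cdot9^{-n}$: a slope with $1/3<|s|<3$ is sent into $\{|s|<1/3\}$ when $n\ge1$ and into $\{|s|>3\}$ when $n\le-1$. For the second condition $M_{\mathtt b}^{\,n}(X_1)\subseteq X_2$ I would pass to the eigenbasis of $M_{\mathtt b}$: in the coordinates $u=x+y$, $v=x-y$ it acts as $\twomatr{16}{0}{0}{4}$, scaling the slope $t=v/u$ by $4^{-n}$, and the involution $t=(1-s)/(1+s)$ carries $X_1$ to the band $\{1/2<|t|<2\}$ and $X_2$ to $\{|t|<1/2\}\cup\{|t|>2\}$. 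The identical one-line computation (now with factor $4$) then shows the band is mapped into $X_2$ by every nonzero power of $M_{\mathtt b}$. The ping-pong lemma then gives $\langle M_{\mathtt a},M_{\mathtt b}\rangle=\Gamma_1\ast\Gamma_2\cong\zz\ast\zz$, the free group of rank two.

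The one genuinely delicate point --- and where I expect care rather than routine checking --- is calibrating the cones so the inclusions hold for \emph{every} nonzero power, including $M_{\mathtt a}^{\pm1}$ and $M_{\mathtt b}^{\pm1}$, not merely for large powers. Because the two generators have different contraction ratios ($9$ versus $4$) while their fixed directions are offset by $45^\circ$, the thresholds must be chosen essentially exactly (slopes $\pm1/3,\pm3$, equivalently $|t|=1/2,2$ in the eigenbasis): the resulting inclusions are critically tight and hold only because the cones are open, so that images land strictly inside. In particular a naive transplant of the classical cones $\{|x|>|y|\}$, $\{|x|<|y|\}$ used for the unipotent pair $\twomatr{1}{2}{0}{1},\twomatr{1}{0}{2}{1}$ fails, and confirming this tightness is the crux of the argument.
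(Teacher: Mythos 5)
Your proof is correct and follows essentially the same route as the paper: the ping-pong lemma applied to the infinite cyclic groups $\langle M_{\mathtt a}\rangle$ and $\langle M_{\mathtt b}\rangle$ with the same cones (slope $|y/x|$ inside versus outside the interval $(1/3,3)$), the only cosmetic difference being that you work on the projective line and explicitly diagonalize $M_{\mathtt b}$ where the paper merely cites ``simple calculations.'' Your closing observation that the inclusions are critically tight (images landing on the open boundary thresholds $1/3$ and $1/2$) is accurate and, if anything, makes the verification more transparent than the paper's.
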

\begin{proof}
We use Lemma \ref{lem:pingpong} for the standard action of $GL_2(\qq)$ on $\qq^2$, with $\Gamma_1=\langle M_{\mathtt{a}}\rangle$ and $\Gamma_2=\langle M_{\mathtt{b}}\rangle$ being infinite cyclic subgroups (since $\left|\det M_{\mathtt{a}}\right|,\left|\det M_{\mathtt{b}}\right|\ne 1$). Consider the following two subsets of $\qq^2$:

$X_1=\left\{\mathbf{v}=\twovec{x}{y}\in \qq^2: x,y\ne 0,~ \left(\frac{|y|}{|x|}<\frac13 \mathrm{~or~} \frac{|y|}{|x|}>3\right)\right\},$

$X_2=\left\{\mathbf{v}=\twovec{x}{y}\in \qq^2: x,y\ne 0,~  \frac13<\frac{|y|}{|x|}<3\right\}.$

The two sets are clearly disjoint and nonempty, so let us check conditions (1) and (2). Via simple calculations we can see the following.

\begin{enumerate}[label=(\arabic*)]
\item If $\mathbf{v}$ satisfies $\frac13<\frac{|y|}{|x|}<3$ then $M_{\mathtt{a}}^n\mathbf{v}=\twovec{x'}{y'}$ satisfies $\frac{|y'|}{|x'|}<\frac13$ for any $n\ge 1$, and $\frac{|y'|}{|x'|}>3$ for any $n\le -1$. In all cases we have $x',y'\ne 0$.
\item If $\mathbf{v}$ satisfies $0<\frac{|y|}{|x|}<\frac13$ or $\frac{|y|}{|x|}>3$ then $M_{\mathtt{b}}^n\mathbf{v}=\twovec{x'}{y'}$ satisfies $\frac13<\frac{y'}{x'}<3$ for any $n\ge 1$, and $-3<\frac{y'}{x'}<-\frac13$ for any $n\le -1$. In all cases we have $x',y'\ne 0$.
\end{enumerate}

We can hence conclude that $\Gamma=\Gamma_1\ast \Gamma_2$, i.e., $M_{\mathtt{a}}$ and $M_{\mathtt{b}}$ generate a free group of rank two.
\end{proof}

As $M_{\mathtt{a}}$ and $M_{\mathtt{b}}$ generate a free group, for any fixed $r$ the matrices $M_f$ for words of length $r$ are all distinct. We exploit this to prove the existence of a large number of words $f$ for which the first entry of $M_f\twovec{1}{1}$ is distinct, and ultimately, for given $\ell$, a large number of codes $C$ of length $\ell$ having distinct values of $W_C(1,3)$.

\begin{proof}[Finishing the proof of Theorem \ref{thm:expmany}]
	First let us assume $n=4n'$, in which case the code-based construction uses codewords of length $2n'$.
	
	By Proposition \ref{prop:freesubgroup}, for every $r\ge 1$ the set $\mathcal{M}_r=\{M_f: f\in \{\mathtt{a},\mathtt{b}\}^r\}$ contains $2^r$ distinct matrices $M_f$ of size $2\times 2$.
	
	Let us now restrict our attention to those $M_g\in \mathcal{M}_{n'-1}$ for which $g$ contains exactly $\lfloor (n'-1)/2\rfloor$ letters $\mathtt{a}$ and $\lceil (n'-1)/2\rceil$ letters $\mathtt{b}$: let $\mathcal{M}^*_{n'-1}$ be the set of these matrices. These $T$ matrices, where $T=\binom{n'-1}{\lfloor (n'-1)/2\rfloor}=\binom{\left\lfloor(n-4)/4\right\rfloor}{\left\lfloor(n-4)/8\right\rfloor}$, all have the same determinant, namely $D=9^{\lfloor (n'-1)/2\rfloor}\cdot 64^{\lceil (n'-1)/2\rceil}$.
	
	Introduce the notation $M_f=\twomatr{\alpha_f}{\beta_f}{\gamma_f}{\delta_f}$ for a word $f$. We will show that the first entry of $M_f\twovec{1}{1}$, which is $\alpha_f+\beta_f$, can take at least $T^{1/3}$ distinct values for $M_f\in \mathcal{M}_{n'}$, proving the theorem in this case.
	
	For $M_g\in \mathcal{M}^*_{n'-1}$, count the number of distinct values $\alpha_g+\beta_g$ takes, depending on which we distinguish two cases.
	
	 \begin{description}[style=nextline,leftmargin=1em]
	\item[Case 1: The values of $\alpha_g+\beta_g$ take at least $T^{1/3}$ distinct values.]
	In this case, consider the set $\mathtt{a}\{\mathtt{a},\mathtt{b}\}^{n'-1} = \{\mathtt{a}g: g\in \{\mathtt{a},\mathtt{b}\}^{n'-1}\}\subseteq \{\mathtt{a},\mathtt{b}\}^{n'}$. Then for $f=\mathtt{a}g$, $M_{f}=\twomatr{9}{0}{0}{1}M_g$, so $\alpha_{f}+\beta_{f}=9(\alpha_g+\beta_g)$ also takes at least $T^{1/3}$ distinct values.
	
	\item[Case 2: The values of $\alpha_g+\beta_g$ take less than $T^{1/3}$ distinct values.]
	Then by pigeonhole, there is a subset $\mathcal{M}'\subseteq \mathcal{M}^*_{n'-1}$ of size at least $T^{2/3}$ for which $\alpha_g+\beta_g$ takes identical values. For the matrices $M_g\in\mathcal{M'}$, count the number of distinct values $9\alpha_g+\beta_g$ takes, distinguishing two further cases.
	
	\begin{description}[style=nextline,leftmargin=1em]
	\item[Case 2a: The values of $9\alpha_g+\beta_g$ take at least $T^{1/3}$ distinct values.] In this case, consider the set $\{\mathtt{a},\mathtt{b}\}^{n'-1}\mathtt{a} = \{g\mathtt{a}: g\in \{\mathtt{a},\mathtt{b}\}^{n'-1}\}\subseteq \{\mathtt{a},\mathtt{b}\}^{n'}$. Then for $f=g\mathtt{a}$, $M_{f}=M_g\twomatr{9}{0}{0}{1}$, so $\alpha_{f}+\beta_{f}=9\alpha_g+\beta_g$ also takes at least $T^{1/3}$ distinct values.
	
	\item[Case 2b: The values of $9\alpha_g+\beta_g$ take less than $T^{1/3}$ distinct values.] Then by pigeonhole, there exists $\mathcal{M}''\subseteq \mathcal{M}'$ of size at least $T^{1/3}$ for which both $\alpha_g+\beta_g$ and $9\alpha_g+\beta_g$ are constant for $M_g\in \mathcal{M}''$, meaning that both $\alpha_g\equiv \alpha$ and $\beta_g\equiv \beta$ are constant. Then for all matrices $M_g\in \mathcal{M}''$, we have $\alpha\delta_g-\beta\gamma_g=D$, where $\alpha,\beta,\gamma_g,\delta_g\in \zz_{\ge 0}$ and $D\in \zz_{>0}$. One can easily see that all solutions of this equation must have distinct values of $\gamma_g+\delta_g$. Therefore, considering the set $\mathtt{b}\{\mathtt{a},\mathtt{b}\}^{n'-1}=\{\mathtt{b}g: g\in \{\mathtt{a},\mathtt{b}\}^{n'-1}\}\subseteq \{\mathtt{a},\mathtt{b}\}^{n'}$, for $f=\mathtt{b}g$ one has $M_{f}=\twomatr{10}{6}{6}{10}M_g$, so $\alpha_f+\beta_f=10(\alpha+\beta)+6(\gamma_g+\delta_g)$ are all distinct, taking at least $T^{1/3}$ distinct values.
		\end{description}
	\end{description}
	
	Now assume $n=4n'+2$ instead. Then by the previous argument, we can get at least $\binom{\left\lfloor(n-6)/4\right\rfloor}{\left\lfloor(n-6)/8\right\rfloor}^{1/3}=\binom{\left\lfloor(n-4)/4\right\rfloor}{\left\lfloor(n-4)/8\right\rfloor}^{1/3}$ codes of length $2n'$ having distinct values of $W_{C_{2n'}}(1,3)$. If we add a $0$ coordinate to the end of each codeword in such a code $C_{2n'}$, we get a code $C_{2n'+1}\le \ff_2^{2n'+1}$ with $W_{C_{2n'+1}}(1,3)=3W_{C_{2n'}}(1,3)$, hence attaining $[3,1]$-avoiders in $\ff_2^n$ of at least $\binom{\left\lfloor(n-4)/4\right\rfloor}{\left\lfloor(n-4)/8\right\rfloor}^{1/3}$ sizes from the code-based construction.
	
	At last, it is well-known that $\binom{s}{\left\lfloor s/2\right\rfloor}\sim \frac{2^s}{\sqrt{\frac{\pi}{2}\cdot s}}$ as $s\to \infty$, resulting in the given lower bound of $$\binom{\left\lfloor(n-4)/4\right\rfloor}{\left\lfloor(n-4)/8\right\rfloor}^{1/3}\ge c\cdot\frac{2^{n/12}}{\sqrt{n}}$$ for some absolute constant $c>0$.
\end{proof}
\begin{remark}
The author believes that because of the free group structure, the values of the first entry of $M_f\twovec{1}{1}$ for $f\in \{\mathtt{a},\mathtt{b}\}^r$ actually take at least $(2-o(1))^r$ distinct values as $r\to \infty$. If this is true, then this proof method would give at least $(2^{1/4}-o(1))^n$ different sizes of $[3,1]$-avoiders.
\end{remark}
\section*{Acknowledgement}
The author would like to thank Zoltán Lóránt Nagy for his helpful remarks regarding this manuscript.

\end{document}